\newtheorem{theorem}{Theorem}[section]
\newtheorem{conjecture}[theorem]{Conjecture}
\newtheorem{proposition}[theorem]{Proposition}
\newtheorem{lemma}[theorem]{Lemma}
\newtheorem{definition}[theorem]{Definition}
\newtheorem{corollary}[theorem]{Corollary}
\newtheorem{example}[theorem]{Example}
\newtheorem{remark}[theorem]{Remark}
\numberwithin{equation}{section}
\newcommand{\Qqq}{{\mathbb Q}}
\newcommand{\Zzz}{{\mathbb Z}}
\newcommand{\hz}{\hat{0}}
\newcommand{\ho}{\hat{1}}
\newcommand{\tensor}{\otimes}
\newcommand{\ab}{\av\bv}
\newcommand{\ba}{\bv\av}
\newcommand{\av}{{\bf a}}
\newcommand{\bv}{{\bf b}}
\newcommand{\cd}{\cv\dv}
\newcommand{\ctd}{\cv\mbox{-}2\dv}
\newcommand{\cv}{{\bf c}}
\newcommand{\dv}{{\bf d}}
\newcommand{\zab}{\Zzz\langle\av,\bv\rangle}
\newcommand{\zcd}{\Zzz\langle\cv,\dv\rangle}
\newcommand{\zx}{\Zzz[x]}
\newcommand{\Rab}{R\langle\av,\bv\rangle}
\newcommand{\Rcd}{R\langle\cv,\dv\rangle}
\newcommand{\Sab}{S\langle\av,\bv\rangle}
\newcommand{\Scd}{S\langle\cv,\dv\rangle}
\DeclareMathOperator{\tail}{tail}
\DeclareMathOperator{\head}{head}
\DeclareMathOperator{\id}{id}
\DeclareMathOperator{\Asc}{Asc}
\DeclareMathOperator{\Des}{Des}
\DeclareMathOperator{\QSym}{QSym}
\newcommand{\A}{A}
\newcommand{\B}{B}
\newcommand{\cupdot}{\mathbin{\dot{\cup}}}
\newcommand{\vanish}[1]{}
\begin{document}

\title{Balanced and Bruhat graphs}

\author{Richard EHRENBORG and Margaret READDY}

\address{Department of Mathematics, University of Kentucky, Lexington,
  KY 40506-0027.\hfill\break \tt http://www.math.uky.edu/\~{}jrge/,
  richard.ehrenborg@uky.edu.}

\address{Department of Mathematics, University of Kentucky, Lexington,
  KY 40506-0027.\hfill\break
\tt http://www.math.uky.edu/\~{}readdy/,
margaret.readdy@uky.edu.}

\subjclass[2010]
{Primary
06A11, 
52B05, 
Secondary
05E05, 
06A08, 
16T15, 
20F55. 
} 
\keywords{Alexander duality,
          Balanced digraph,
          Bruhat graph, 
          $\cd$-index, 
          Eulerian poset,
          Quasisymmetric function, 
          $R$-labeling}
\date{\today}

\begin{abstract}
We generalize chain
enumeration in graded partially ordered sets
by relaxing the graded, poset and Eulerian requirements.
The resulting balanced digraphs,
which include 
the classical Eulerian posets having an $R$-labeling,
imply the existence of the (non-homogeneous)
$\cd$-index,
a key invariant for studying inequalities for
the flag vector  of polytopes.
Mirroring Alexander duality for Eulerian posets,
we show an analogue of Alexander duality
for bounded balanced digraphs.
For Bruhat graphs of Coxeter groups,
an important family 
of balanced graphs,
our theory gives elementary proofs
of the existence of the complete $\cd$-index
and its properties.
We also introduce 
the rising and falling quasisymmetric functions
of a labeled acyclic digraph and show they
are Hopf algebra homomorphisms
mapping
balanced digraphs to the Stembridge peak algebra.
We conjecture non-negativity of the $\cd$-index for acyclic digraphs
having a balanced linear edge labeling.
\end{abstract}

\maketitle

\section{Introduction}

The $\cd$-index is an important invariant for studying
face incidence data of polytopes,
and more generally, 
chain enumeration of Eulerian posets.
It is a non-commutative polynomial which
removes all 
the linear redundancies which hold among the 
flag vector entries~\cite{Bayer_Klapper}
as described by the 
generalized Dehn--Sommerville relations~\cite{Bayer_Billera}.
Ehrenborg and Readdy's  discovery of the  inherent 
coalgebraic structure of the $\cd$-index
and the techniques
developed in~\cite{Ehrenborg_Readdy_c} have
been applied 
to settle many fundamental problems,
including 
giving compact proofs of 
old results~\cite{Bayer_Billera,Billera_Ehrenborg_Readdy_z},
transparent techniques to compute
flag vectors of oriented matroids~\cite{Billera_Ehrenborg_Readdy_om},
explicit formulas for the toric $h$-vector,
versions of 
Stanley's Gorenstein* 
conjecture~\cite{Billera_Ehrenborg,Billera_Ehrenborg_Readdy_z} 
leading up to a proof of the conjecture itself~\cite{Ehrenborg_Karu},
new non-trivial inequalities among the face
incidence data of polytopes~\cite{Ehrenborg_lifting,Ehrenborg_zonotopes}
and 
extending 
classical subspace arrangement results
to other manifolds~\cite{Ehrenborg_Goresky_Readdy,Ehrenborg_Readdy_Slone}.

There are two new developments in this area. The first is
work of Ehrenborg, Goresky and Readdy,
who extend flag vector
enumeration ideas to Whitney stratified spaces and quasi-graded
posets~\cite{Ehrenborg_Goresky_Readdy,Ehrenborg_Readdy_manifolds}. 
The very notion of enumeration is replaced with
the topologically meaningful Euler-enumeration in the case of Whitney
stratified-spaces, and weighted zeta functions in the case of
quasi-graded posets. The Eulerian condition becomes a natural
condition involving the Euler characteristic and weighted zeta
function, respectively. Unlike the case of polytopes and regular
decompositions of spheres, the coefficients of the $\cd$-index 
can be
negative, expanding the nature of questions in the
field.

The second development is Billera and Brenti's work on the
``complete'' $\cd$-index, a nonhomogeneous extension of the
$\cd$-index~\cite{Billera_Brenti}.
It is known that the (strong) Bruhat order on a
Coxeter group forms an Eulerian poset~\cite{Verma},
hence any interval has a $\cd$-index.
Using quasisymmetric function theory,
they prove the Bruhat graph,
a directed graph which includes the cover relations of the Bruhat
order as well as ``algebraic shortcuts'' between elements,
 has a
{\em non-homogeneous} $\cd$-index.
Furthermore, they show  one can compute the Kazhdan--Lusztig polynomials 
via
this complete $\cd$-index of Bruhat
intervals.
It is exactly this paper which motivated the 
present authors  to
look for a general setting to guarantee the
existence of this non-homogeneous $\cd$-index.

Recall a partially ordered set (poset) is {\em graded} 
if its elements have a
well-defined distance from the minimal element of the poset.
Bj\"orner and Stanley~\cite[Theorem~2.7]{Bjorner}
showed that 
if a graded poset has a combinatorial
labeling of its cover relations known as an $R$-labeling,
one can determine the flag $f$-vector in terms of the labeling
inherited by the maximal chains.
When the
poset is Eulerian,
that is, every interval satisfies the Euler--Poincar\'e relation,
one can reduce this information
to the classical $\cd$-index.

By relaxing the 
graded, poset and Eulerian requirements,
we study
a general class of labeled directed graphs which satisfy a 
{\em balanced condition}.
Recall a poset having an $R$-labeling demands that there be 
exactly one rising chain
in each interval of the poset
and,
if the poset is Eulerian, exactly
one falling chain in each interval.
Our balanced condition states the number of rising paths of length~$k$
must equal
the number of falling paths of length $k$.
This allows us to directly prove the existence of the $\cd$-index
for balanced graphs 
and capture the results for
Bruhat graphs as an important special case.

The presentation we give is self-contained.  
To underscore the connection with posets,
results which also hold for the $\ab$- and $\cd$-index of graded posets
will be stated as separate remarks.

An overview of the paper is as follows.
In Section~\ref{section_labeled_graphs}
we introduce
the notion of a labeled acyclic digraph
in order to model poset structure in this more general
setting.   
An interpretation of its chain enumeration is given in terms
of directed paths in the graph.
In Section~\ref{section_coalgebras}
we then set the coalgebraic
groundwork for flag enumeration
in labeled acyclic digraphs.
We show the $\ab$-index of a labeled acyclic digraph is
a coalgebra homomorphism from the linear span of bounded 
labeled acyclic digraphs to the polynomial ring
$\Zzz\langle \av, \bv \rangle$; 
see Corollary~\ref{corollary_coalgebra_homomorphism}.

We introduce the $\tilde{r}$ and $\tilde{f}$
polynomials in Section~\ref{section_the_cd_index}
to $q$-enumerate
the rising and falling chains in 
the intervals of a labeled digraph.
These polynomials hark back to 
the theory of Coxeter groups and Kazhdan--Lusztig polynomials;
see~\cite[Chapter~5]{Bjorner_Brenti}
as well as~\cite{Kazhdan_Lusztig_Hecke,Kazhdan_Lusztig_Schubert}.
The main result
(Theorem~\ref{theorem_cd})
gives three equivalent statements
which imply the (non-homogeneous) $\ab$-index 
of an acyclic digraph can be written as a (non-homogeneous)
$\cd$-index.
The key condition is that the
number of rising paths of length $k$ equals
the number of falling paths of length $k$.
We include a second  proof of one of the implications
in Theorem~\ref{theorem_cd}
which uses  Hochschild cohomology.

A theory that mimics the notion of an Eulerian poset
would not be complete without Alexander duality.
Recall that for an Eulerian poset $P$ with
decomposition $S \cupdot T \cupdot \{\hz,\ho\}$
and rank function~$\rho$,
the celebrated Alexander duality states that
the M\"obius function values
$\mu(\hz,\ho)$ of each of the two posets
$S \cup \{\hz,\ho\}$ and $T \cup \{\hz,\ho\}$
are equal up to the sign $(-1)^{\rho(P)-1}$.
In Section~\ref{section_Alexander}
we introduce the notion of
a restricted digraph. 
We state Alexander duality
where the M\"obius function is
replaced by a signed sum over falling chains.

In Section~\ref{section_Bruhat} we apply our results
to the important family of Bruhat graphs.
Using the existence of
a reflection ordering, introduced by Dyer~\cite{Dyer},
the existence of the $\cd$-index of the Bruhat graph and its
properties follow.

In Section~\ref{section_quasisymmetric_functions}
we review the basic set-up surrounding the 
ring of quasisymmetric functions.
For a bounded labeled digraph we introduce the  {\em rising} and
{\em falling quasisymmetric functions} and relate these
with a shift of the aforementioned rising and falling
polynomials.
We show the rising
and falling quasisymmetric functions  are Hopf algebra homomorphisms
from the Hopf algebra formed by the linear span of bounded
labeled acyclic digraphs
to the quasisymmetric functions.
We reformulate Theorem~\ref{theorem_cd} in terms of
Stembridge's peak algebra~\cite{Stembridge}.

Section~\ref{section_linear}
begins with the result that 
given any polynomial in 
the ring $\zcd$
having non-negative coefficients, we show how
to build an Eulerian poset having this $\cd$-polynomial.

Recall that the classical $\cd$-index of 
the face lattice of a polytope,
and more generally,
any
spherically-shellable poset,
has non-negative coefficients~\cite{Stanley_d}.
Non-negativity also holds for Gorenstein* posets~\cite{Karu}.
These results form  two cornerstones for the research program
of classifying all the linear inequalities satisfied by the
$\cd$-index.
We conjecture non-negativity
for the $\cd$-index of a bounded labeled acyclic digraph equipped
with a balanced edge labeling that is linear;
see Conjecture~\ref{conjecture_linear}.

In the concluding remarks
we end with open questions and research
directions we are pursuing.

\section{Labeled graphs}
\label{section_labeled_graphs}

We begin by introducing a class of directed graphs
in order to relax
the notion of grading in a graded partially ordered set (poset).
For further details about posets, see~\cite[Chapter~3]{Stanley_EC_I}.

Let $G = (V,E)$ be a directed, acyclic and locally finite graph
with
multiple edges allowed.
Recall that an {\em acyclic graph} does not have any directed cycles
and 
the property of a graph being {\em locally finite}
requires that 
there are a finite number
of paths between any two vertices.
Each directed edge~$e$ has a tail and a head,
denoted respectively by
$\tail(e)$ and $\head(e)$.
View each directed edge as an arrow from its tail to its head.
A directed path $p$ of length $k$ from
a vertex $x$ to a vertex $y$ is a list
of $k$ directed edges $(e_{1}, e_{2}, \ldots, e_{k})$ such that
$\tail(e_{1}) = x$,
$\head(e_{k}) = y$ and
$\head(e_{i}) = \tail(e_{i+1})$ for $i = 1, \ldots, k-1$.
We denote the length of a path $p$
by $\ell(p)$.

Since the graph is acyclic, it does not have any loops.
Furthermore, the acyclicity condition implies
there is a natural partial order on the vertices of $G$ by defining
the order relation $x \leq y$ if there is a directed path
from the vertex $x$ to the vertex $y$. It is straightforward to
verify that this relation is
reflexive, antisymmetric and transitive.
Furthermore, it allows us to define the {\em interval}
$[x,y]$ to be 
$$
     [x,y] = \{z \in V(G) \:\: : \:\:
                  \text{there is a directed path from $x$ to $z$ 
                        and a directed path from $z$ to $y$}\}.
$$
We view the interval $[x,y]$ as the vertex-induced
subgraph of the digraph $G$, where the edges have the
same labels as in the digraph $G$.
The locally finite condition is now
equivalent to that 
every interval~$[x,y]$
in the graph has finite cardinality.

\begin{example}
{\rm
Consider
a (locally finite) poset~$P$ and let the directed edges be the
cover relations of the poset, in other words, the Hasse
diagram of $P$ is the digraph.  When we draw the Hasse
diagram of a poset we view its edges as being directed upward.
Moreover, the fact the poset is locally finite
implies that the associated digraph is locally finite.
Hence this is an acyclic digraph.
}
\end{example}

A relaxed notion of edge labeling is needed which will enable
us to define the $\ab$-index,
and ultimately,
the $\cd$-index.
Let $\Lambda$ be a set with a relation $\sim$,
that is, there is a subset $R \subseteq \Lambda \times \Lambda$
such that for $i,j \in \Lambda$ we have
$i \sim j$ if and only if $(i,j) \in R$.
A {\em labeling of $G$} is a function $\lambda$ from the set of edges
of $G$ to the set~$\Lambda$. 
Let $\av$ and $\bv$ be two non-commutative variables
each of degree one. 
For a path $p= (e_{1}, \ldots, e_{k})$ of length $k$, 
where $k \geq 1$, we define
the {\em descent word} $u(p)$ to be the $\ab$-monomial
$u(p) = u_{1} u_{2} \cdots u_{k-1}$, where
$$    u_{i}
   =
\begin{cases}
    \av & \text{ if } \lambda(e_{i}) \sim     \lambda(e_{i+1}) , \\
    \bv & \text{ if } \lambda(e_{i}) \not\sim \lambda(e_{i+1}) .
\end{cases} $$
Observe that the descent word $u(p)$
has degree $k-1$, that is,
one less than the length of the path $p$.
The {\em $\ab$-index} of an interval $[x,y]$ 
is defined to be
\begin{equation}
    \Psi([x,y]) 
       =
    \sum_{p} u(p)  , 
\label{equation_ab-index}
\end{equation}
where the sum is over all directed paths $p$ from $x$ to $y$.

\begin{example}
{\rm 
In the case when the relation on $\Lambda$ is
a linear order, the digraph is the
Hasse diagram of a graded poset and every interval
has a unique rising chain. This condition is the classical
notion of $R$-labeling introduced by
Bj\"orner and Stanley~\cite{Bjorner}.
}
\end{example}

In keeping with the poset motivation,
we will continue to use the terminology
rising and falling in our more general setting.
See the paper~\cite{Bjorner_Wachs},
where
Bj\"orner and Wachs
weakened the condition that $\Lambda$ is a linear order
to a partial order.
As a remark, one can further loosen the condition on the relation on
$\Lambda$ so that the only labels which need to be compared
are pairs of elements $(\lambda(e), \lambda(f))$
such that $\head(e) = \tail(f)$.

For graded posets with an $R$-labeling
equation~\eqref{equation_ab-index}
gives a different definition of the notion
of the $\ab$-index of a poset.
See~\cite[Lemma~3.1]{Ehrenborg_Readdy_r} for
more details.

Given a labeled directed graph $G$, define
the graph $G^{*}$ by reversing all the edges,
keeping the edge labeling the same,
and reversing the relation $\sim$ on $\Lambda$,
that is, for $e \in E(G)$ we have
$\head_{G^{*}}(e) = \tail_{G}(e)$
and
$\tail_{G^{*}}(e) = \head_{G}(e)$.
The labeling is given by $\lambda_{G^{*}}(e) = \lambda_{G}(e)$.
Finally, the new relation $\Lambda^{*}$ is given by
$i \sim^{*} j$ if and only if $j \sim i$
for $i,j \in \Lambda$.
For an $\ab$-monomial
$u = u_{1} u_{2} \cdots u_{k}$
define the reverse monomial by
$u^{*} = u_{k} \cdots u_{2} u_{1}$
and extend linearly to an involution on 
the non-commutative polynomial ring $\zab$.
Observe that a path~$p$ from~$x$ to~$y$ in~$G$
corresponds to a path~$p^{*}$ from~$y$ to~$x$ in~$G^{*}$.
Moreover, the descent word of the path satisfies
$u(p^{*}) = u(p)^{*}$. Finally this relation extends to the
$\ab$-index of the entire interval~$[x,y]$, that is,
$\Psi([x,y]^{*}) = \Psi([y,x]) = \Psi([x,y])^{*}$.

\section{Coalgebras}
\label{section_coalgebras}

In this section we develop the
underlying coalgebraic structure
of labeled acyclic digraphs.

Let $\zab$ be the non-commutative polynomial ring
in the degree $1$ variables $\av$ and $\bv$ with integer
coefficients.
On the ring $\zab$ define a coproduct $\Delta$
by defining it on an $\ab$-monomial $u_{1} u_{2} \cdots u_{n}$
by
$$   \Delta(u_{1} u_{2} \cdots u_{n})
   =
     \sum_{i=1}^{n} u_{1} \cdots u_{i-1}  \tensor  u_{i+1} \cdots u_{n} ,
$$
and extend by linearity to $\zab$.
This coproduct, together with the usual multiplication,
does not form a bialgebra. Instead the
{\em Newtonian condition} is satisfied:
\begin{equation}
\Delta(v \cdot w)
   =
\sum_{w}  v \cdot w_{(1)} \tensor w_{(2)}
   +
\sum_{v}  v_{(1)} \tensor v_{(2)} \cdot w  .
\label{equation_Newtonian_condition}
\end{equation}
Here we use the Sweedler notation for
the coproduct~\cite{Joni_Rota,Sweedler}.
This gives the ring $\zab$ a Newtonian coalgebra structure.

\begin{theorem}
For a labeled acyclic digraph $G$
with two vertices $x$ and $y$,
the following holds:
$$   \Delta(\Psi([x,y]))
   =
     \sum_{x < z < y}
            \Psi([x,z]) \tensor \Psi([z,y])   .  
$$
\label{theorem_coalgebra_homomorphism}
\end{theorem}
\begin{proof}
For a path $p = (e_{1}, \ldots, e_{k})$
let $i(p)$ denote the set of interior
vertices on the path, that is,
$i(p) = \{\head(e_{1}), \ldots, \head(e_{k-1})\}$.
Furthermore, for a path $p$ from $x$ to $y$ and $x \leq z < w \leq y$,
let $p|_{[z,w]}$ denote the path restricted to the interval $[z,w]$.
Now we have
\begin{align*}
     \Delta(\Psi([x,y]))
& =
     \sum_{p} \Delta(u(p)) \\
& =
     \sum_{p}
       \sum_{i=1}^{\ell(p)-1} 
           u_{1}(p) \cdots u_{i-1}(p)
         \tensor
           u_{i+1}(p) \cdots u_{\ell(p)-1}(p) \\
& =
     \sum_{p}
       \sum_{z \in i(p)}
           u(p|_{[x,z]}) \tensor u(p|_{[z,y]}) \\
& =
     \sum_{x < z < y}
       \sum_{p : z \in i(p)}
           u(p|_{[x,z]}) \tensor u(p|_{[z,y]}) \\
& =
     \sum_{x < z < y}
         \left(
           \sum_{p_{1}} u(p_{1})
         \right)
       \tensor
         \left(
           \sum_{p_{2}} u(p_{2})
         \right) \\
& =
     \sum_{x < z < y}
         \Psi([x,z])
       \tensor
         \Psi([z,y])   .
\end{align*}
Here we are summing over all maximal paths $p$ in the
interval $[x,y]$.  
In the second to last equality
$p_{1}$ and $p_{2}$ are paths in $[x,z]$,
respectively $[z,y]$. 
\end{proof}

\begin{remark}
{\rm
In the case of Bruhat graphs,
Theorem~\ref{theorem_coalgebra_homomorphism}
was stated in~\cite[Proposition~2.11]{Billera_Brenti}. 
}
\end{remark}

An acyclic digraph $G$ is {\em bounded} if has
a unique source and a unique sink.
Following poset notation, we denote the unique source by
$\hz$ and the unique sink by $\ho$.
For brevity,
we let $\Psi(G)$ denote $\Psi([\hz,\ho])$.

For two bounded labeled acyclic digraphs $G$ and $H$
we define the product $G * H$ as follows.
We tacitly assume that $V(G), V(H), E(G), E(H), \Lambda_{G}$
and $\Lambda_{H}$ are disjoint.
Let the vertex set of $G * H$ be the disjoint union of 
$V(G) - \{\ho\}$ and $V(H) - \{\hz\}$,
that is,
$V(G * H) = (V(G) - \{\ho_{G}\}) \cup (V(H) - \{\hz_{H}\})$.
Let the edge set be
\begin{align*}
     E(G * H)
& =
     \{e \in E(G) \:\: : \:\: \head(e) \neq \ho_{G}\} \\
&  \cup \:\: 
     \{f \in E(H) \:\: : \:\: \tail(f) \neq \hz_{H}\} \\
&  \cup \:\: 
     \{(e,f) \in E(G) \times E(H) \:\: : \:\:
          \head(e) = \ho_{G}, \tail(f) = \hz_{H} \} ,
\end{align*}
where the new edge $(e,f)$ is
defined by $\tail((e,f)) = \tail(e)$ and $\head((e,f)) = \head(f)$.
Let the label set $\Lambda$ be defined by
$ 
   \Lambda
 =
   \Lambda_{G} \cup \Lambda_{H} \cup \Lambda_{G} \times \Lambda_{H}$,
with the relation on $\Lambda$ given
by the following four cases:
$$
\begin{cases}
  \lambda \sim \mu
& \text{ if }
 \lambda, \mu \in \Lambda_{G}, \lambda \sim_{\Lambda_{G}} \mu, \\
  \lambda \sim (\mu_{1}, \mu_{2})
& \text{ if }
 \lambda, \mu_{1} \in \Lambda_{G},
           \mu_{2} \in \Lambda_{H},
  \lambda \sim_{\Lambda_{G}} \mu_{1}, \\
  (\lambda_{1}, \lambda_{2}) \sim \mu
& \text{ if }
 \lambda_{1}      \in \Lambda_{G},
  \lambda_{2}, \mu \in \Lambda_{H},
  \lambda_{2} \sim_{\Lambda_{H}} \mu, \\
  \lambda \sim \mu
& \text{ if }
 \lambda, \mu \in \Lambda_{H}, \lambda \sim_{\Lambda_{H}} \mu.
\end{cases}
$$
Finally, define the labeling $\lambda : E(G*H) \longrightarrow \Lambda$
by the three cases
$$  \begin{cases}
         \lambda(e) = \lambda_{G}(e) 
         & \text{ if } e \in E(G) , \\
         \lambda((e,f)) = (\lambda_{G}(e), \lambda_{H}(f))
         & \text{ if } (e,f) \in E(G) \times E(H) , \\
         \lambda(f) = \lambda_{H}(f) 
         & \text{ if } f \in E(H) .
           \end{cases}
$$
This product is the labeled analogue of the Stanley product of posets;
see~\cite{Stanley_d}.

\begin{theorem}
Let $G$ and $H$ be two bounded labeled acyclic digraphs,
where each has a unique source and unique sink.
Then
the $\ab$-index satisfies
$$  \Psi(G * H) = \Psi(G) \cdot \Psi(H)   ,   $$
where $*$ is the labeled analogue of the Stanley product
of posets.
\label{theorem_algebra_homomorphism}
\end{theorem}
\begin{proof}
Each directed path $p$ from $\hz$ to $\ho$
in $G*H$ has the form
$p = (e_{1}, \ldots, e_{i-1}, (e_{i},f_{1}), f_{2}, \ldots, f_{j})$,
which factors into the two paths
$p_{1} = (e_{1}, \ldots, e_{i-1}, e_{i})$
and
$p_{2} = (f_{1}, f_{2}, \ldots, f_{j})$
in $G$, respectively~$H$.
The descent word also factors as
$u(p) = u(p_{1}) \cdot u(p_{2})$.
By summing over all paths, the result follows.
\end{proof}

Let $\mathcal{G}$ be the linear span of 
bounded labeled acyclic digraphs with $\hz \neq \ho$.
The space $\mathcal{G}$ is a Newtonian coalgebra
with the product $*$ and the coproduct
$$  \Delta(G) = \sum_{\hz < z < \ho} [\hz,z] \tensor [z,\ho] . $$
Theorems~\ref{theorem_coalgebra_homomorphism}
and~\ref{theorem_algebra_homomorphism}
imply the following corollary.
\begin{corollary}
\label{corollary_coalgebra_homomorphism}
The $\ab$-index is a coalgebra homomorphism from 
$\mathcal{G}$,  the linear span of 
bounded labeled acyclic digraphs with $\hz \neq \ho$,
to the coalgebra $\zab$.
\end{corollary}

On the coalgebra $\zab$ define an involution
$u \longmapsto \overline{u}$ by uniformly exchanging $\av$'s
and $\bv$'s. Observe this involution is a Newtonian
coalgebra automorphism, that is, the product
and the coproduct satisfy
$$     \overline{u \cdot v}
    =
       \overline{u} \cdot \overline{v} 
\:\:\:\: \text{ and } \:\:\:\:
       \Delta(\overline{u})
    =
       \sum_{u} \overline{u_{(1)}} \cdot \overline{u_{(2)}} .  $$

Define $\cv = \av + \bv$ and $\dv = \av\bv + \bv\av$.
Observe that $\deg(\cv) = 1$ and $\deg(\dv) = 2$.
In what follows we need to
consider a linear order on $\cd$-monomials
of degree $n$.
Let $u$ and $v$ be two $\cd$-monomials
$u = \cv^{i_{0}} \dv \cv^{i_{1}} \dv \cdots \dv \cv^{i_{p}}$
and
$v = \cv^{j_{0}} \dv \cv^{j_{1}} \dv \cdots \dv \cv^{j_{q}}$.
If $u$ contains fewer occurrences of the variable~$\dv$ than~$v$ 
(that is, $p < q$), then set $u < v$.
If $u$ contains the same number of occurrences of the variable~$\dv$ as $v$
($p = q$),
and
the vector
$(i_{0}, i_{1}, \ldots, i_{p}) <_{lex}
 (j_{0}, j_{1}, \ldots, j_{p})$
in lexicographic order $<_{lex}$,
then set $u < v$.

\begin{lemma}
Let $R$ be a ring and let $S$ be a subring of $R$.
Then the following intersection holds:
$$
    \Rcd \cap \Sab = \Scd .
$$
In other words,
when any $\cd$-polynomial $w$ with coefficients in $R$
is expanded as an $\ab$-polynomial and has
coefficients in the subring $S$,
then all the coefficients
of $w$, written as a $\cd$-polynomial, already belong to
the subring $S$.
\label{lemma_coefficients}
\end{lemma}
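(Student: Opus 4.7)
The inclusion $\Scd \subseteq \Rcd \cap \Sab$ is immediate, since the substitutions $\cv = \av + \bv$ and $\dv = \ab + \bv\av$ have integer coefficients and therefore preserve the base ring. The substance of the lemma is the reverse inclusion, and the plan is to establish it by exhibiting, degree by degree, a unitriangular change of basis from the $\cd$-monomials of a given degree to a suitably chosen family of $\ab$-monomials.

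To each $\cd$-monomial $u = \cv^{i_0} \dv \cv^{i_1} \dv \cdots \dv \cv^{i_p}$ we associate the distinguished $\ab$-monomial
$$\phi(u) = \av^{i_0} \cdot \bv\av \cdot \av^{i_1} \cdot \bv\av \cdots \bv\av \cdot \av^{i_p},$$
obtained by substituting $\av$ for every $\cv$ and $\bv\av$ for every $\dv$. The two claims to verify will be: (i) the coefficient of $\phi(u)$ in the $\ab$-expansion of $u$ itself equals $1$; and (ii) whenever $v$ is a $\cd$-monomial of the same degree with $v > u$ in the linear order defined above, the coefficient of $\phi(u)$ in the $\ab$-expansion of $v$ is $0$.

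Granted (i) and (ii), write $w = \sum_{u} r_{u} \cdot u$ with $r_{u} \in R$. Reading off the coefficient of $\phi(u_0)$ in the $\ab$-expansion of $w$ yields a relation of the form $\mathrm{coeff}_{\phi(u_0)}(w) = r_{u_0} + \sum_{v < u_0} c_{u_0,v} \cdot r_{v}$, with each $c_{u_0,v}$ a non-negative integer. Inducting from the minimum of the order on the degree-$n$ $\cd$-monomials (which is $\cv^n$, whose image $\av^n$ occurs only in the expansion of $\cv^n$ itself) upward, each $r_u$ is recovered as an $S$-linear combination of $\ab$-coefficients of $w$; these lie in $S$ by hypothesis, so $r_u \in S$ for every $u$.

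The main obstacle will be claim (ii). When $v$ contains strictly more occurrences of $\dv$ than $u$, every term in the $\ab$-expansion of $v$ has strictly more than $p$ copies of $\bv$, whereas $\phi(u)$ has exactly $p$, and the claim is trivial. The delicate case is when $v$ has the same number $p$ of $\dv$'s but $(j_0,\ldots,j_p) >_{lex} (i_0,\ldots,i_p)$. Here $\phi(u)$ can arise only by expanding every $\cv$ in $v$ as $\av$ and each $\dv$ as either $\av\bv$ or $\bv\av$; encoding the latter choices by $\epsilon_1,\ldots,\epsilon_p \in \{0,1\}$ and matching the positions of the letters $\bv$ produced by the expansion of $v$ against those of $\phi(u)$ forces a recursion of the form $j_k = i_k + \epsilon_k - \epsilon_{k+1}$, with the conventions $\epsilon_0 = \epsilon_{p+1} = 0$. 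Examining the smallest index at which some $\epsilon$ equals $1$ then shows $(j_0,\ldots,j_p) \leq_{lex} (i_0,\ldots,i_p)$, with equality only when $v = u$; this contradicts $v > u$ and completes the plan.
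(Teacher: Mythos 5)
Your proposal is correct and follows essentially the same route as the paper: the distinguished monomial $\phi(u)=\av^{i_0}\bv\av\,\av^{i_1}\bv\av\cdots\bv\av\,\av^{i_p}$ is exactly the monomial $z$ used in the paper, and the triangularity of the coefficient matrix with respect to the stated linear order is the paper's key observation as well (the paper phrases the conclusion as a contradiction at the first offending monomial rather than as an induction from the minimum, but this is the same argument). Your $\epsilon$-recursion verifying that $z$ appears only in the expansions of $\cd$-monomials $v\leq u$ is a correct elaboration of a step the paper merely asserts.
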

\begin{proof}
The containment
$\Rcd \cap \Sab \supseteq \Scd$
is clear.
It is enough to prove the
reverse containment for homogeneous $\cd$-polynomials
of degree $n$.
To derive a contradiction,
assume that there is a $\cd$-polynomial $w$
belonging to $\Rcd \cap \Sab$ but not to $\Scd$.
This means there is a $\cd$-monomial
in $w$ whose coefficient does not lie in the subring~$S$.
Let
$u = \cv^{i_{0}} \dv \cv^{i_{1}} \dv \cdots \dv \cv^{i_{p}}$
be the first such $\cd$-monomial in $w$
with respect to the previously described linear order.
Consider the $\ab$-monomial
$z = \av^{i_{0}} \bv \av \av^{i_{1}} \bv \av \cdots \bv \av \av^{i_{p}}$.
The $\ab$-monomial $z$ occurs when expanding $u$
into an $\ab$-polynomial. Observe that any other
$\cd$-monomial $v$ that has $z$ 
occurring in its $\ab$-expansion
must satisfy $v < u$ in the linear order.
Note that the coefficient of $z$ in the $\ab$-polynomial $w$
lies in the subring $S$. This coefficient is the
sum of certain coefficients of the $\cd$-polynomial $w$
where all but one (the coefficient of $u$)
belong to the subring $S$. This contradicts the assumption
that the coefficient of $u$ does not belong to the subring.
Hence the intersection holds.
\end{proof}

\section{The $\cd$-index}
\label{section_the_cd_index}

It is now natural to ask when the $\ab$-index
of a directed graph can be written
as a $\cd$-index,
that is,
when is it
an element of $\Zzz\langle \cv,\dv \rangle$ with
$\cv = \av + \bv$ and $\dv = \ab + \ba$.
To do this, we introduce the {\em rising}
and {\em falling polynomial} of an interval $[x,y]$
of a directed graph,
denoted
$  \widetilde{r}_{x,y}(q)$ and
$  \widetilde{f}_{x,y}(q)$.
After investigating their properties,
we relate these polynomials with
the aforementioned 
ability to express the non-homogeneous
$\ab$-index of as a
non-homogeneous
$\cd$-index.

A directed path $p = (e_{1}, e_{2}, \ldots, e_{k})$
in a labeled digraph $G$ is
called {\em rising} if
$\lambda(e_{i}) \sim \lambda(e_{i+1})$
for all $i = 1, \ldots, k-1$.
Similarly, a path $p$ is called {\em falling} if
$\lambda(e_{i}) \not\sim \lambda(e_{i+1})$
for all $i = 1, \ldots, k-1$.
For $x < y$
let $\widetilde{r}_{x,y}(q)$ be the polynomial
$$  \widetilde{r}_{x,y}(q)
   =
    \sum_{p \in {\mathcal R}(x,y)} q^{\ell(p) - 1}   ,  $$
where the sum ranges over all rising paths $p$ from $x$ to $y$.
Similarly, let $\widetilde{f}_{x,y}(q)$ be the polynomial
$$  \widetilde{f}_{x,y}(q)
   =
    \sum_{p \in {\mathcal F}(x,y)} q^{\ell(p) - 1}   ,  $$
where the sum ranges over all falling paths $p$ from $x$ to $y$.

Define two algebra maps $\kappa$ and $\lambda$
on $\zab$ by letting
$$   \begin{array}{l l l}
   \kappa(\av) = \av-\bv, &
   \kappa(\bv) = 0, &
   \kappa(1) = 1, \\
   \lambda(\av) = 0, &
   \lambda(\bv) = \bv-\av, &
   \lambda(1) = 1. 
     \end{array}     $$
The map $\kappa$ appeared first in
the paper~\cite[Section~10]{Ehrenborg_Readdy_c},
whereas the $\lambda$ map is more recent;
see~\cite[Section~2.2.2]{Slone}.
Observe these two maps are related by
$\overline{\kappa(u)} = \lambda(\overline{u})$.
The $\kappa$ and $\lambda$ maps allows one to
recapture the $\widetilde{r}$- and $\widetilde{f}$-polynomials
from the $\ab$-index $\Psi([x,y])$ as follows.
\begin{lemma}
For an interval $[x,y]$ in a labeled digraph $G$,
\begin{align}
\kappa(\Psi([x,y]))  & =  \widetilde{r}_{x,y}(\av-\bv) ,  \\
\lambda(\Psi([x,y])) & =  \widetilde{f}_{x,y}(\bv-\av) .
\end{align}
\end{lemma}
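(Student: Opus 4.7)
My plan is to prove both identities by a direct term-by-term computation on the defining sum $\Psi([x,y]) = \sum_{p} u(p)$, exploiting the fact that $\kappa$ and $\lambda$ are algebra maps that annihilate one of the two generators.

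First I would expand $\kappa(\Psi([x,y])) = \sum_{p} \kappa(u(p))$, where the sum ranges over all directed paths from $x$ to $y$. Since $\kappa$ is an algebra homomorphism with $\kappa(\bv) = 0$, the key observation is that $\kappa(u_{1} u_{2} \cdots u_{k-1}) = \kappa(u_{1}) \kappa(u_{2}) \cdots \kappa(u_{k-1})$ vanishes as soon as a single $u_{i}$ equals $\bv$. The monomial $u(p)$ consists entirely of $\av$'s precisely when $\lambda(e_{i}) \sim \lambda(e_{i+1})$ for all consecutive edges of $p$, which is exactly the condition that $p$ is rising. For such a rising path $p$ of length $\ell(p)$, we obtain $\kappa(u(p)) = \kappa(\av)^{\ell(p)-1} = (\av-\bv)^{\ell(p)-1}$. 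Summing over rising paths gives
$$
\kappa(\Psi([x,y])) = \sum_{p \text{ rising}} (\av-\bv)^{\ell(p)-1} = \widetilde{r}_{x,y}(\av-\bv),
$$
which is the first identity. I should also verify the trivial case of paths of length $1$, where $u(p) = 1$ and both sides contribute $1$; here a path of length one satisfies the rising condition vacuously.

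The second identity follows by the same argument with the roles of $\av$ and $\bv$ interchanged: since $\lambda$ is an algebra map with $\lambda(\av) = 0$, the monomial $\lambda(u(p))$ is nonzero precisely when every letter of $u(p)$ equals $\bv$, i.e., when $p$ is falling, and in that case it evaluates to $(\bv-\av)^{\ell(p)-1}$. Alternatively, one can deduce the second identity from the first by applying the bar involution together with the relation $\overline{\kappa(u)} = \lambda(\overline{u})$ noted just above the lemma, using that reversing the relation on $\Lambda$ swaps rising and falling paths.

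I do not anticipate a serious obstacle: the entire lemma is a one-line computation enabled by the algebra-map property of $\kappa$ and $\lambda$ together with the fact that each of them kills one generator. The only point requiring a moment of care is the bookkeeping for the degree: the descent word $u(p)$ has degree $\ell(p)-1$ (one less than the path length), so the exponent $\ell(p)-1$ in the definitions of $\widetilde{r}_{x,y}(q)$ and $\widetilde{f}_{x,y}(q)$ matches exactly, and the empty product for length-$1$ paths lines up with $\kappa(1) = \lambda(1) = 1$.
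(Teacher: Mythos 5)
Your proposal is correct and follows essentially the same route as the paper: since $\kappa$ is an algebra map killing $\bv$, only the pure $\av$-monomials (i.e., the descent words of rising paths) survive, each contributing $(\av-\bv)^{\ell(p)-1}$, and the second identity follows by the symmetric argument. Your extra care with the degree bookkeeping and the length-one case is fine but not a departure from the paper's proof.
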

\begin{proof}
Since $\kappa(\bv) = 0$, the algebra map $\kappa$ applied
to an $\ab$-monomial only preserves the pure $\av$-terms,
and then replaces each $\av$ with $\av-\bv$.
Hence $\kappa(\Psi([x,y]))$ enumerates the rising chains.
A symmetric argument proves the second identity.
\end{proof}

\begin{lemma}
For any $\ab$-polynomial $u$ the following two identities
hold:
\begin{align}
u
& =
\kappa(u) + \sum_{u} \kappa(u_{(1)}) \cdot \bv \cdot u_{(2)} , 
\label{equation_kappa} \\
u
& =
\lambda(u) + \sum_{u} \lambda(u_{(1)}) \cdot \av \cdot u_{(2)} .
\label{equation_lambda}
\end{align}
\end{lemma}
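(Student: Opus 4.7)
The plan is to establish identity~\eqref{equation_kappa} by induction on degree, then derive identity~\eqref{equation_lambda} from it via the bar involution. By linearity it suffices to prove~\eqref{equation_kappa} for an $\ab$-monomial $u = u_{1} u_{2} \cdots u_{n}$, and I induct on~$n$. The base case $n=0$ is immediate, since $\kappa(1) = 1$ and $\Delta(1) = 0$.

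For the inductive step, factor $u = u_{1} \cdot w$ where $w = u_{2} \cdots u_{n}$. Since $\Delta(u_{1}) = 1 \tensor 1$ when $u_{1}$ is a single letter, the Newtonian condition~\eqref{equation_Newtonian_condition} yields
\begin{equation*}
\Delta(u) = \sum_{w} u_{1} w_{(1)} \tensor w_{(2)} \:+\: 1 \tensor w .
\end{equation*}
Applying this identity and using that $\kappa$ is an algebra homomorphism, the right-hand side of~\eqref{equation_kappa} becomes
\begin{equation*}
\kappa(u_{1}) \cdot \kappa(w) \:+\: \kappa(u_{1}) \sum_{w} \kappa(w_{(1)}) \bv w_{(2)} \:+\: \bv w
\:=\: \kappa(u_{1}) \cdot w \:+\: \bv w,
\end{equation*}
where the last equality invokes the inductive hypothesis on $w$. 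When $u_{1} = \av$ this reduces to $(\av - \bv) w + \bv w = \av w = u$, and when $u_{1} = \bv$ it reduces to $0 + \bv w = u$, completing the induction.

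Identity~\eqref{equation_lambda} then follows by applying the bar involution to~\eqref{equation_kappa}. Since bar is an algebra automorphism of $\zab$ satisfying $\overline{\kappa(v)} = \lambda(\overline{v})$, and since $\sum_{\overline{v}} \overline{v}_{(1)} \tensor \overline{v}_{(2)} = \sum_{v} \overline{v_{(1)}} \tensor \overline{v_{(2)}}$ (directly from the defining formula for~$\Delta$ on monomials), barring identity~\eqref{equation_kappa} and relabeling $\overline{u}$ as the new variable yields the second identity. There is no real obstacle beyond the bookkeeping; the main subtlety is to apply the Newtonian condition in the direction that splits off the initial letter $u_{1}$, so that the boundary term $1 \tensor w$ combines with $\kappa(u_{1}) \cdot w$ to reconstitute~$u$ regardless of whether $u_{1}$ is $\av$ or $\bv$.
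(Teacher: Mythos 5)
Your proof is correct and follows essentially the same route as the paper: induction on degree driven by the Newtonian condition and the multiplicativity of $\kappa$, with the second identity obtained from the first via the bar involution and $\overline{\kappa(v)} = \lambda(\overline{v})$. The only cosmetic difference is that you peel off the leading letter $u_{1}$ in the inductive step, whereas the paper runs the induction over an arbitrary factorization $u = v \cdot w$; the computation is the same.
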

\begin{proof}
Since equation~\eqref{equation_kappa} is
linear in $u$, it is enough to prove it for
$\ab$-monomials. We proceed by induction
on the degree of an $\ab$-monomial.
The three base cases $u = 1$, $u = \av$ and $u = \bv$
are straightforward to verify.
Assume now that equation~\eqref{equation_kappa} 
holds for the $\ab$-monomials $v$ and $w$. Then it also holds
for the product $v \cdot w$ by the following
calculation. The right-hand side of the identity~\eqref{equation_kappa}
in the case $u = v \cdot w$
is equal to
\begin{align*}
   \kappa(v \cdot w)
& +
 \sum_{w} \kappa(v \cdot w_{(1)}) \cdot \bv \cdot w_{(2)} 
 + \sum_{v} \kappa(v_{(1)}) \cdot \bv \cdot v_{(2)} \cdot w \\
& =
   \kappa(v) \cdot
\left(
   \kappa(w)
 + \sum_{w} \kappa(w_{(1)}) \cdot \bv \cdot w_{(2)}
\right)
 + \sum_{v} \kappa(v_{(1)}) \cdot \bv \cdot v_{(2)} \cdot w \\
& =
   \kappa(v) \cdot w
 + \sum_{v} \kappa(v_{(1)}) \cdot \bv \cdot v_{(2)} \cdot w \\
& =
   v \cdot w .
\end{align*}
The second identity~\eqref{equation_lambda}
follows by applying the involution 
$u \longmapsto \overline{u}$
and the relation
$\overline{\kappa(u)} = \lambda(\overline{u})$.
\end{proof}

\begin{remark}
{\rm
Equation~\eqref{equation_kappa}
is really Stanley's recursion~\cite{Stanley_d} for the
$\ab$-index of a graded poset
with rank function $\rho$, that is,
$$   \Psi([x,y])
   =
     (\av-\bv)^{\rho(x,y)-1}
   +
     \sum_{x < z < y}
            (\av-\bv)^{\rho(x,z)-1}
         \cdot
            \bv
         \cdot
            \Psi([z,y])  .  $$
This recursion follows directly by conditioning
on the first non-zero element in a chain.
Equation~\eqref{equation_kappa}
can be proven by using the fact that
$\kappa(\Psi([x,y])) = (\av-\bv)^{\rho(x,y)-1}$,
the $\ab$-index is
a coalgebra homomorphism~\cite{Ehrenborg_Readdy_c},
and the $\ab$-indexes of graded posets span $\zab$.
}
\label{remark_Stanley_recursion}
\end{remark}

\begin{remark}
{\rm 
The coalgebra $\zab$ does not have a counit.
Philosophically speaking, the two identities~\eqref{equation_kappa}
and~\eqref{equation_lambda} are a replacement for the defining
relation of the counit since they both allow
us to recapture the polynomial $u$ after applying the coproduct $\Delta$.
}
\end{remark}

Recall that the two non-commutative
variables $\cv$ and $\dv$
are defined by $\cv = \av + \bv$
and $\dv = \av \cdot \bv + \bv \cdot \av$,
with
$\cv$ of degree $1$ and $\dv$ of degree $2$.
The next lemma shows that $\ab$-polynomials
of a certain form are indeed $\cd$-polynomials.

\begin{lemma}
Let $p(x)$ and $q(x)$ be two polynomials in $\zx$
such that their odd degree terms agree, that is,
$p(x) - p(-x) = q(x) - q(-x)$. Then
\begin{align}
&     p(\av-\bv) + q(\bv-\av)
\in \zcd, 
\label{equation_Stanley_i} \\
&     p(\av-\bv) \cdot \bv + p(\bv-\av) \cdot \av  
\in \zcd.
\label{equation_Stanley_ii} 
\end{align}
\label{lemma_Stanley}
\end{lemma}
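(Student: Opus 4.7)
The key observation is the identity $(\av-\bv)^{2} = \cv^{2} - 2\dv$, a one-line computation from $\cv = \av+\bv$ and $\dv = \av\bv + \bv\av$. This immediately implies that every polynomial in $(\av-\bv)^{2}$ with integer coefficients lies in $\zcd$; equivalently, for any \emph{even} polynomial $s(x) \in \zx$ we have $s(\av-\bv) \in \zcd$. This single fact will drive both parts of the lemma.

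With that in hand, the plan is to decompose $p = p_{e} + p_{o}$ and $q = q_{e} + q_{o}$ into their even and odd parts. The hypothesis $p(x) - p(-x) = q(x) - q(-x)$ is precisely the statement that $p_{o} = q_{o}$, and this equality is preserved when $x$ is replaced by $\av-\bv$, using $p_{e}(-x) = p_{e}(x)$ and $p_{o}(-x) = -p_{o}(x)$.

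For~\eqref{equation_Stanley_i}, substituting $x = \av-\bv$ gives $q(\bv-\av) = q_{e}(\av-\bv) - p_{o}(\av-\bv)$, so the odd parts cancel and
$$
p(\av-\bv) + q(\bv-\av) \;=\; (p_{e} + q_{e})(\av-\bv),
$$
an even polynomial in $\av-\bv$, hence in $\zcd$ by the opening observation.

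For~\eqref{equation_Stanley_ii}, the analogous substitution collapses the expression to
$$
p(\av-\bv)\bv + p(\bv-\av)\av \;=\; p_{e}(\av-\bv) \cdot \cv + p_{o}(\av-\bv) \cdot (\bv-\av).
$$
The first summand lies in $\zcd$ because $p_{e}(\av-\bv) \in \zcd$ and $\cv \in \zcd$. For the second, the scalar polynomial $x \cdot p_{o}(x) \in \zx$ is even (the product of two odd polynomials), so $p_{o}(\av-\bv)(\av-\bv) \in \zcd$, and negating gives the required membership. No serious obstacle arises: the whole argument is controlled by the identity $(\av-\bv)^{2} = \cv^{2}-2\dv$, and the only care needed is correctly tracking parities when substituting $\bv-\av$ for $\av-\bv$.
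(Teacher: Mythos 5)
Your proof is correct and takes essentially the same route as the paper's: both arguments rest on the identity $(\av-\bv)^{2}=\cv^{2}-2\cdot\dv$ together with the cancellation of matching odd powers, the paper working monomial by monomial and you packaging the identical computation via the even/odd decomposition of $p$ and $q$. Your reduction of~\eqref{equation_Stanley_ii} to $p_{e}(\av-\bv)\cdot\cv - p_{o}(\av-\bv)\cdot(\av-\bv)$ is just the aggregated form of the paper's two identities $(\av-\bv)^{2k}\bv+(\bv-\av)^{2k}\av=(\cv^{2}-2\cdot\dv)^{k}\cv$ and $(\av-\bv)^{2k+1}\bv+(\bv-\av)^{2k+1}\av=-(\cv^{2}-2\cdot\dv)^{k+1}$.
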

\begin{proof}
First note that
$(\av-\bv)^{2 \cdot k} = (\bv-\av)^{2 \cdot k} = 
(\cv^{2} - 2 \cdot \dv)^{k}$
and
$  (\av-\bv)^{2 \cdot k + 1}
 + (\bv-\av)^{2 \cdot k + 1}
 = 
   0$.
Hence by linearity it 
follows that the polynomial in~\eqref{equation_Stanley_i}
is a $\cd$-polynomial for all
polynomials $p(x)$ and~$q(x)$
satisfying the condition of the lemma.
The fact that the polynomial in~\eqref{equation_Stanley_ii}
is a $\cd$-polynomial follows again by 
linearity and by considering the parity of
the power of the monomial $x^{n}$.
For even powers we have
$  (\av-\bv)^{2 \cdot k} \cdot \bv
 + (\bv-\av)^{2 \cdot k} \cdot \av
 = 
   (\cv^{2} - 2 \cdot \dv)^{k} \cdot \cv$
and for odd powers we have
$  (\av-\bv)^{2 \cdot k + 1} \cdot \bv
 + (\bv-\av)^{2 \cdot k + 1} \cdot \av
 = 
   - (\cv^{2} - 2 \cdot \dv)^{k+1}$.
\end{proof}

\begin{remark}
{\rm
Equations~\eqref{equation_Stanley_i}
and~\eqref{equation_Stanley_ii}
in Lemma~\ref{lemma_Stanley}
can be viewed as linearizations
of statements due to
Stanley~\cite{Stanley_d}.
}
\label{remark_Stanley_extended}
\end{remark}

We now come to the main result of this section.

\begin{theorem}
For a labeled acyclic digraph $G$, the following
three statements are equivalent:
\begin{itemize}
\item[(i)]
For every interval $[x,y]$ in the digraph $G$
and for every non-negative integer $k$,
the number of rising paths from $x$ to $y$ of length $k$
is equal to
the number of falling paths from $x$ to $y$ of length~$k$.

\item[(ii)]
For every interval $[x,y]$ in the digraph $G$
and for every {\em even} positive integer $k$,
the number of rising paths from $x$ to $y$ of length $k$
is equal to
the number of falling paths from $x$ to $y$ of length~$k$.

\item[(iii)]
The $\ab$-index of every interval $[x,y]$ in the digraph $G$,
where $x < y$, is a polynomial in $\zcd$.
\end{itemize}
\label{theorem_cd}
\end{theorem}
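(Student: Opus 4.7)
The plan is to establish the cyclic chain of implications (iii) $\Rightarrow$ (i) $\Rightarrow$ (ii) $\Rightarrow$ (iii). The middle implication is just a restriction of the quantifier, so only the two extremes need work.

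For (iii) $\Rightarrow$ (i), I would use that $\kappa(\dv) = \lambda(\dv) = 0$, since $\kappa$ kills $\bv$ and $\lambda$ kills $\av$. Thus, writing $\Psi([x,y]) \in \zcd$ as $p(\cv)$ plus a sum of monomials that each contain a $\dv$, the maps $\kappa$ and $\lambda$ retain only the pure $\cv$-part: $\kappa(\Psi([x,y])) = p(\av-\bv)$ and $\lambda(\Psi([x,y])) = p(\bv-\av)$. Comparing with the identities $\kappa(\Psi([x,y])) = \widetilde{r}_{x,y}(\av-\bv)$ and $\lambda(\Psi([x,y])) = \widetilde{f}_{x,y}(\bv-\av)$ forces $\widetilde{r}_{x,y}(q) = p(q) = \widetilde{f}_{x,y}(q)$, which is (i).

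For the main direction (ii) $\Rightarrow$ (iii), I would induct on $|[x,y]|$. The base case $|[x,y]|=2$ is immediate, since every path from $x$ to $y$ is then a single edge of descent word $1$, so $\Psi([x,y])$ is an integer multiple of $1 \in \zcd$. For the inductive step, add the two identities~\eqref{equation_kappa} and~\eqref{equation_lambda} applied to $u = \Psi([x,y])$, and use the coalgebra morphism property from Theorem~\ref{theorem_coalgebra_homeomorphism} to get
\begin{align*}
2 \cdot \Psi([x,y])
& = \kappa(\Psi([x,y])) + \lambda(\Psi([x,y])) \\
& \quad + \sum_{x<z<y} \bigl(\kappa(\Psi([x,z])) \cdot \bv + \lambda(\Psi([x,z])) \cdot \av\bigr) \cdot \Psi([z,y]) .
\end{align*}
The inductive hypothesis places $\Psi([x,z])$ and $\Psi([z,y])$ in $\zcd$ for every interior vertex $z$; the already-proved direction (iii) $\Rightarrow$ (i) then gives $\widetilde{r}_{x,z} = \widetilde{f}_{x,z}$, and taking this common polynomial as $p$ in the second part of Lemma~\ref{lemma_Stanley} puts each bracketed factor in the sum into $\zcd$. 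Hypothesis (ii) means that for every even positive $k$ the coefficient of $q^{k-1}$ agrees in $\widetilde{r}_{x,y}$ and $\widetilde{f}_{x,y}$; that is, their odd-degree parts agree, so the first part of Lemma~\ref{lemma_Stanley} places $\kappa(\Psi([x,y])) + \lambda(\Psi([x,y])) = \widetilde{r}_{x,y}(\av-\bv) + \widetilde{f}_{x,y}(\bv-\av)$ in $\zcd$ as well.

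The remaining obstacle, and the whole reason Lemma~\ref{lemma_coefficients} was set up in advance, is the factor of $2$: the argument delivers $2 \cdot \Psi([x,y]) \in \zcd$ rather than $\Psi([x,y]) \in \zcd$ directly. Dividing by $2$ over the rationals gives $\Psi([x,y]) \in \qcd$, and since $\Psi([x,y])$ already lies in $\zab$, the intersection identity $\qcd \cap \zab = \zcd$ from Lemma~\ref{lemma_coefficients} (with $R = \Qqq$ and $S = \Zzz$) supplies $\Psi([x,y]) \in \zcd$ and closes the induction.
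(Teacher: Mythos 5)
Your proof is correct and follows essentially the same route as the paper: the cycle of implications, the addition of identities~\eqref{equation_kappa} and~\eqref{equation_lambda} applied to $\Psi([x,y])$, the use of Lemma~\ref{lemma_Stanley} for both the boundary terms and the summands, and Lemma~\ref{lemma_coefficients} to remove the factor of $2$. The only (harmless) variations are that you derive (iii)~$\Rightarrow$~(i) via $\kappa(\dv)=\lambda(\dv)=0$ rather than the paper's specialization $\av=q,\bv=0$ versus $\av=0,\bv=q$, you induct on $|[x,y]|$ rather than on the longest path, and you make explicit---which the paper leaves implicit---that hypothesis (ii) is exactly the agreement of odd-degree parts needed to invoke equation~\eqref{equation_Stanley_i}.
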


\begin{definition}
A labeled acyclic digraph $G$ is said to be {\em balanced}
if it satisfies condition $(i)$ in
Theorem~\ref{theorem_cd}. Such a labeling is called
a {\em balanced labeling}.
\end{definition}
A different way to express the balanced condition is
that
$\widetilde{r}_{x,y}(q)  =  \widetilde{f}_{x,y}(q)$
for all pairs of elements~$x$ and~$y$ such that $x < y$.

\begin{figure}
\setlength{\unitlength}{1.3 mm}
\begin{center}
\begin{picture}(110,30)(0,0)
\put(0,5){
\begin{picture}(50,20)(-25,0)
\thicklines
\qbezier(0,0)(-12.5,5)(-25,10)
\qbezier(0,0)(-6,5)(-12,10)
\qbezier(0,0)(6,5)(12,10)
\qbezier(0,0)(12.5,5)(25,10)
\qbezier(0,20)(-12.5,15)(-25,10)
\qbezier(0,20)(-6,15)(-12,10)
\qbezier(0,20)(6,15)(12,10)
\qbezier(0,20)(12.5,15)(25,10)

\qbezier(0,0)(-8,10)(0,20)
\qbezier(0,0)(0,10)(0,20)
\qbezier(0,0)(8,10)(0,20)

\put(0,0){\circle*{1}}
\put(-25,10){\circle*{1}}
\put(-12,10){\circle*{1}}
\put(12,10){\circle*{1}}
\put(25,10){\circle*{1}}
\put(0,20){\circle*{1}}

\put(-18,4){\large $1$}
\put(-8.5,4){\large $1$}

\put(-18,14){\large $2$}
\put(-9,14){\large $2$}

\put(16,4){\large $2$}
\put(7.5,4){\large $2$}

\put(16,14){\large $1$}
\put(8,14){\large $1$}

\put(-6,9){\large $1$}
\put(-2,9){\large $2$}
\put(2,9){\large $3$}

\end{picture}
}

\put(70,0){
\begin{picture}(40,30)(0,0)
\thicklines
\qbezier(20,0)(10,5)(0,10)
\qbezier(20,0)(15,5)(10,10)
\qbezier(20,0)(20,5)(20,10)
\qbezier(20,0)(25,5)(30,10)
\qbezier(20,0)(30,5)(40,10)
\multiput(0,10)(10,0){5}{\qbezier(0,0)(0,5)(0,10)}
\multiput(0,10)(10,0){4}{\qbezier(0,0)(5,5)(10,10)}
\qbezier(40,10)(20,15)(0,20)
\qbezier(20,30)(10,25)(0,20)
\qbezier(20,30)(15,25)(10,20)
\qbezier(20,30)(20,25)(20,20)
\qbezier(20,30)(25,25)(30,20)
\qbezier(20,30)(30,25)(40,20)

\put(20,0){\circle*{1}}
\multiput(0,10)(10,0){5}{\circle*{1}}
\multiput(0,20)(10,0){5}{\circle*{1}}
\put(20,30){\circle*{1}}

\put(4,5){\large $2$}
\put(11,5){\large $2$}
\put(18,5){\large $2$}
\put(23,5){\large $2$}
\put(28,5){\large $2$}

\put(-2,14){\large $1$}
\put(8,14){\large $1$}
\put(18,12){\large $1$}
\put(28,14){\large $1$}
\put(38,14){\large $1$}

\put(2,14){\large $3$}
\put(12,14){\large $3$}
\put(23,15){\large $3$}
\put(33,15){\large $3$}
\put(34,9){\large $3$}

\put(4,23){\large $2$}
\put(11,23){\large $2$}
\put(18,23){\large $2$}
\put(23,23){\large $2$}
\put(28,23){\large $2$}

\end{picture}
}

\end{picture}
\end{center}

\caption{Two balanced directed graphs
where the relation on the labeled set $\Lambda = \{1,2,3\}$ is
the natural linear order.
Their respective $\cd$-indexes are
$2 \cdot \cv + 3$
and
$5 \cdot \dv$.
These two examples show that the
$\cd$-index of a graph is not necessarily homogeneous
and that the coefficient of the $\cv$-power term
is not necessarily $1$.
}
\label{figure_one}
\end{figure}

\begin{example}
{\rm
See Figure~\ref{figure_one} for two examples
of balanced digraphs and their corresponding $\cd$-indexes. 
In Figure~\ref{figure_two} we give a
labeled digraph with two different
relations on the underlying label set.
Each yields a balanced digraph.
Note the resulting $\cd$-indexes differ, with
the second relation yielding a negative coefficient. 
}
\end{example}

\begin{figure}
\setlength{\unitlength}{1 mm}
\begin{center}
\begin{picture}(100,30)(0,0)
\put(0,0){
\begin{picture}(10,30)(0,0)
\thicklines
\qbezier(5,0)(5,5)(5,10)
\qbezier(5,10)(0,15)(5,20)
\qbezier(5,10)(10,15)(5,20)
\qbezier(5,20)(5,25)(5,30)
\multiput(5,0)(0,10){4}{\circle*{1}}
\put(1,4){\large $\beta$}
\put(-2,14){\large $\alpha$}
\put(10,14){\large $\gamma$}
\put(1,23){\large $\beta$}

\end{picture}
}

\put(50,0){
\begin{picture}(50,30)(0,0)
\put(0,25){\large Relation (i):}
\put(25,25){\large $\alpha \sim \beta \sim \gamma$}
\put(25,20){\large $\gamma \not\sim \beta \not\sim \alpha$}
\put(0,10){\large Relation (ii):}
\put(25,10){\large $\alpha \sim \beta \sim \alpha$}
\put(25,5){\large $\gamma \not\sim \beta \not\sim \gamma$}
\end{picture}
}

\end{picture}
\end{center}

\caption{A labeled directed graph
and two different relations on the label set
$\Lambda = \{\alpha, \beta, \gamma\}$.
Both relations yield a balanced graph.
Relation (i) is the linear order
and  the $\cd$-index is
$\av\bv + \bv\av = \dv$,
whereas relation (ii)
gives the $\cd$-index 
$\av\av + \bv\bv = \cv^{2} - \dv$.}
\label{figure_two}
\end{figure}

\begin{remark}
{\rm
Condition $(ii)$, that is, it suffices to check
the balanced condition for
paths of even length, has a corresponding statement
for graded posets.
A graded poset $P$ of odd rank
satisfying that every proper interval
of $P$ is Eulerian is also an Eulerian poset.
See~\cite[Chapter~3, Exercise~69(c)]{Stanley_EC_I}.
}
\end{remark}

\begin{remark}
{\rm
Consider graded posets with an $R$-labeling.
In this case,
the balanced condition 
implies that the number of rising chains (namely $1$)
in an interval $[x,y]$ of rank $k+1$
is equal to the number of falling chains,
that is, 
$1 = h_{\emptyset}([x,y]) = h_{\{1, \ldots, k\}}([x,y])$.
By Hall's theorem on the M\"obius function, this can
be stated as
$\mu(x,y) = (-1)^{\rho(x,y)}$. Since this relation holds for
all intervals $[x,y]$, this implies that the poset is Eulerian
and hence the $\cd$-index exists.  This result is classical;
see~\cite{Bayer_Klapper, Stanley_d}.
This is reminiscent of the work in~\cite{Billera_Liu},
where it was shown that if the Euler--Poincar\'e relation
holds for every interval in a poset  then the poset
satisfies the generalized Dehn--Sommerville relations and
has a $\cd$-index.
}
\end{remark}

\begin{proof}[Proof of Theorem~\ref{theorem_cd}.]
The implication that $(i) \Longrightarrow (ii)$ is clear.
For $(iii) \Longrightarrow (i)$,
observe that the variables $\cv$ and $\dv$
are symmetric in $\av$ and $\bv$. Hence
$\widetilde{r}_{x,y}(q)  =  \Psi([x,y])|_{\av = q, \bv = 0}
                     =  \Psi([x,y])|_{\cv = q, \dv = 0}
                     =  \Psi([x,y])|_{\av = 0, \bv = q}
                     =  \widetilde{f}_{x,y}(q)$.

Finally, assume that $(ii)$ is true and we will prove
the existence of the $\cd$-index $(iii)$.
The proof is by induction on the longest path in the interval $[x,y]$.
The base case is when the length of the longest path is $1$.
In this case the $\cd$-index is just the number of
edges between $x$ and $y$.
Assume now that the $\cd$-index exists for all subintervals
in $[x,y]$.
Add equations~\eqref{equation_kappa}
and~\eqref{equation_lambda} to obtain
$$
2 \cdot u 
   =
\kappa(u)
   +
\lambda(u)
   +
\sum_{u} \left(\kappa(u_{(1)})  \cdot \bv
            +
               \lambda(u_{(1)}) \cdot \av \right) \cdot u_{(2)} . $$
Now apply this equation to $u = \Psi([x,y])$,
the $\ab$-index of the entire interval $[x,y]$.
Since the $\ab$-index is a coalgebra homomorphism,
we have that
$$
2 \cdot \Psi([x,y])
  =
\widetilde{r}_{x,y}(\av-\bv)
   +
\widetilde{f}_{x,y}(\bv-\av)
   +
\sum_{x < z < y} 
         \left(\widetilde{r}_{x,z}(\av-\bv) \cdot \bv
            +
               \widetilde{f}_{x,z}(\bv-\av) \cdot \av\right)
                                               \cdot \Psi([z,y])
 . $$
By the induction hypothesis we know that 
$\Psi([x,z])$ and $\Psi([z,y])$
are both $\cd$-polynomials with integer coefficients.
By the implication $(iii) \Longrightarrow (i)$, we have
that $\widetilde{r}_{x,z}(q) = \widetilde{f}_{x,z}(q)$.
Thus by 
equation~\eqref{equation_Stanley_ii}
of Lemma~\ref{lemma_Stanley}, 
we know
that the term inside the summation sign is a $\cd$-polynomial
with integer coefficients.
Similarly, by 
equation~\eqref{equation_Stanley_i} of
the same lemma,
the sum of the two terms outside the 
summation sign is a $\cd$-polynomial
with integer coefficients.
Hence the expression $2 \cdot \Psi([x,y])$ is
a $\cd$-polynomial with integer coefficients.
By Lemma~\ref{lemma_coefficients}
with $R = \Qqq$ and $S = \Zzz$,
we conclude that the $\cd$-polynomial~$\Psi([x,y])$ has
integer coefficients.
\end{proof}

\begin{remark}
{\rm
The existence of the $\cd$-index for Eulerian posets
can be proved in a similar manner. Observe that
for any interval $[x,y]$ in a graded poset
we have that
$\lambda(\Psi([x,y]))
  =
 (-1)^{\rho(x,y)} \cdot \mu(x,y) \cdot (\bv-\av)^{\rho(x,y)-1}$.
Hence for an interval $[x,y]$
which satisfies the Eulerian condition we have that
\begin{align*}
\kappa(\Psi([x,y]))
& =
 (\av-\bv)^{\rho(x,y)-1}  ,  \\
\lambda(\Psi([x,y]))   
& =
 (\bv-\av)^{\rho(x,y)-1}  .
\end{align*}
Now proceed as in the proof of Theorem~\ref{theorem_cd}.
}
\label{remark_new_proof}
\end{remark}

A different way to prove the implication 
$(ii) \Longrightarrow (iii)$ 
in Theorem~\ref{theorem_cd} is to use the homology
techniques developed in~\cite{Ehrenborg_Readdy_homology}.
Let $R$ be a commutative ring with a unit and let $A$
be an $R$-module with a coassociative coproduct $\Delta$.
Let $d_{n}: A^{\tensor n} \longrightarrow A^{\tensor (n+1)}$
denote the map
$$    d_{n} 
    = 
      \sum_{i+j=n-1}
             (-1)^{i}
          \cdot
             \id^{\tensor i}
          \cdot
             \Delta
          \cdot
             \id^{\tensor j}   .   $$
The coassociativity of the coproduct $\Delta$
implies that $d_{n} \circ d_{n+1} = 0$, 
that is, $d_{n}$ is the boundary map of a chain complex.
In~\cite{Ehrenborg_Readdy_homology}
the Hochschild cohomology is computed for the chain complex
\begin{equation}
       0
  \longrightarrow
       A
  \stackrel{d_{1}}{\longrightarrow}
       A^{\tensor 2}
  \stackrel{d_{2}}{\longrightarrow}
       A^{\tensor 3}
  \longrightarrow
       \cdots
\label{equation_chain_complex}
\end{equation}
when $A$ is the Newtonian coalgebra $\Rcd$.
Theorem~4.1 in~\cite{Ehrenborg_Readdy_homology}
states when the ring $R$ has $2$ as a unit,
the cohomology vanishes everywhere except in
the bottom cohomology.
Armed with this result, we can give a different proof.

\begin{proof}[Second proof of the implication
$(ii) \Longrightarrow (iii)$ in Theorem~\ref{theorem_cd}.]
Let $R$ be a ring that contains the integers and has~$2$ as a unit.
(One such example is $R = \Qqq$.)
The proof is again by induction on the longest path
in the interval $[x,y]$.
Since the $\ab$-index is a coalgebra homomorphism and by
the induction hypothesis, we have that
$$   \Delta(\Psi([x,y]))
   =
     \sum_{x < z < y}
            \Psi([x,z]) \tensor \Psi([z,y])   
   \in
     \Rcd \tensor \Rcd    . $$
Since $\Delta$ is coassociative, we also have
$d_{2}(\Delta(\Psi([x,y])) = 0$, that is,
the element $\Delta(\Psi([x,y])) = d_{1}(\Psi([x,y]))$
lies in the kernel of the map $d_{2}$.
The chain complex~\eqref{equation_chain_complex}
is exact at this point, so there
is an element $w \in \Rcd$ such that
$\Delta(w) = d_{1}(w) = \Delta(\Psi([x,y]))$.
Hence $w$ and $\Psi([x,y]))$ differ by
an element in the kernel
of $\Delta : \Rab \longrightarrow \Rab \tensor \Rab$.
The kernel of $\Delta$ is
$R[\av-\bv]$, so we have
that
$\Psi([x,y]) - w = p(\av-\bv)$ for some polynomial $p(x)$.

Let $n$ be an odd positive integer.
Condition~$(ii)$ states that
the number of rising paths from $x$ to~$y$ of length $n+1$
is equal to
the number of falling paths from $x$ to $y$ of length $n+1$.
This is equivalent to the condition that  
the coefficients of $\av^{n}$ and $\bv^{n}$
in $\Psi([x,y])$ are identical.
Since $w$ is a $\cd$-polynomial,
the coefficients of $\av^{n}$ and $\bv^{n}$
are also the same.
Hence 
the coefficients of $\av^{n}$ and $\bv^{n}$
in $p(\av-\bv)$ are the same, proving that
the polynomial $p$ only has even degree terms,
that is, $p(\av-\bv)$ is a polynomial in the variable
$(\av-\bv)^{2} = \cv^{2} - 2 \cdot \dv$.
Hence $\Psi([x,y])$ belongs
to $\Rcd$. Again by
Lemma~\ref{lemma_coefficients}
we have $\Psi([x,y]) \in \zcd$.
\end{proof}

\section{Alexander duality}
\label{section_Alexander}

In this section we assume all the digraphs under
consideration are bounded with minimal element~$\hz$
and maximal element~$\ho$.
For a labeled acyclic digraph~$G$ with vertex set~$V$,
we define the restricted digraph~$G_{S}$ where $S$ is
a subset of $V - \{\hz,\ho\}$. 
The edge label set is given by
$$  \Lambda^{+}   =   \bigcup_{n \geq 1} \Lambda^{n}   ,  $$
and the relation $\sim$ on $\Lambda^{+}$ is
$(\lambda_{1}, \ldots, \lambda_{m}) \sim (\mu_{1}, \ldots, \mu_{n})$
if and only if
$\lambda_{m} \sim \mu_{1}$.
The vertex set
of~$G_{S}$ is $S \cup \{\hz,\ho\}$.
For every rising path
$p = (e_{1}, \ldots, e_{k})$ in the digraph $G$
which starts and ends in $S \cup \{\hz,\ho\}$
but none of the intermediate vertices are in $S$,
that is,
$\tail(e_{1}), \head(e_{k}) \in S \cup \{\hz,\ho\}$
but
$\head(e_{2}), \ldots, \head(e_{k-1}) \not\in S$,
let there be a directed edge in $G_{S}$ from $\tail(e_{1})$
to $\head(e_{k})$
with the label $(\lambda(e_{1}), \ldots, \lambda(e_{k}))$
in $\Lambda^{+}$.

For two vertices $x$ and $y$ in the restricted graph $G_{S}$
observe that the number of rising paths from~$x$ to~$y$
is the same as the number of rising paths in the graph $G$.
This follows since a path in $G_{S}$ corresponds
to a path in $G$ as follows.
Let $p^{\prime} = (e^{\prime}_{1}, \ldots, e^{\prime}_{j})$
be a path in $G_{S}$. 
We obtain a path~$p$ in~$G$ by 
concatenating the rising paths that are associated
with the edges $e^{\prime}_{i}$.
Furthermore the condition that the path $p^{\prime}$
is rising in $G_{S}$
is exactly that the path $p$ is rising in $G$,
since the only condition that needs to be verified
is that $p$ is rising at the gluing vertices
$\head(e^{\prime}_{1}), \ldots, \head(e^{\prime}_{j-1})$.

Let $\ell(G)$ denote the length of the longest
path in the digraph $G$. 
We say that an acyclic digraph has the {\em parity condition}
if the length of every path from the source~$\hz$ to the sink
$\ho$ has the same parity. 
Then in a digraph which has
the parity condition, the length of any path 
from $\hz$ to $\ho$ is congruent to~$\ell(G)$
modulo~$2$.

\begin{figure}
\setlength{\unitlength}{1.5 mm}
\begin{center}
\begin{picture}(20,30)(0,0)
\thicklines
\qbezier(10,0)(10,0)(0,10)
\qbezier(10,0)(10,0)(10,10)
\qbezier(10,0)(10,0)(20,10)
\qbezier(0,10)(0,10)(0,20)
\qbezier(0,10)(0,10)(10,20)
\qbezier(10,10)(10,10)(0,20)
\qbezier(10,10)(10,10)(20,20)
\qbezier(20,10)(20,10)(10,20)
\qbezier(20,10)(20,10)(20,20)
\qbezier(0,20)(0,20)(10,30)
\qbezier(10,20)(10,20)(10,30)
\qbezier(20,20)(20,20)(10,30)

\put(10,0){\circle*{1}}
\multiput(0,10)(10,0){3}{\circle*{1}}
\multiput(0,20)(10,0){3}{\circle*{1}}
\put(10,30){\circle*{1}}

\put(2,4){\large $1$}
\put(8,4){\large $2$}
\put(16,4){\large $3$}

\put(-2,14){\large $2$}
\put(3,11){\large $3$}
\put(6,11){\large $1$}
\put(13,11){\large $3$}
\put(16,11){\large $1$}
\put(21,14){\large $2$}

\put(2,24){\large $3$}
\put(8,24){\large $2$}
\put(17,24){\large $1$}

\end{picture}
\hspace*{30 mm}
\begin{picture}(20,30)(0,0)
\thicklines
\qbezier(10,0)(10,0)(0,10)
\qbezier(0,10)(0,10)(10,20)
\qbezier(10,20)(10,20)(10,30)

\qbezier(0,10)(-5,23)(10,30)

\put(10,0){\circle*{1}}
\put(0,10){\circle*{1}}
\put(10,20){\circle*{1}}
\put(10,30){\circle*{1}}

\put(3,3){\large $1$}
\put(5,13){\large $3$}
\put(8,24){\large $2$}

\put(-2,23){\large $23$}

\end{picture}
\hspace*{20 mm}
\begin{picture}(20,30)(0,0)
\thicklines
\qbezier(10,0)(10,0)(10,10)
\qbezier(10,0)(10,0)(20,10)
\qbezier(10,10)(10,10)(0,20)
\qbezier(10,10)(10,10)(20,20)
\qbezier(20,10)(20,10)(20,20)
\qbezier(0,20)(0,20)(10,30)
\qbezier(20,20)(20,20)(10,30)

\qbezier(0,20)(-5,7)(10,0)
\qbezier(20,10)(35,23)(10,30)

\put(10,0){\circle*{1}}
\put(10,10){\circle*{1}}
\put(20,10){\circle*{1}}
\put(0,20){\circle*{1}}
\put(20,20){\circle*{1}}
\put(10,30){\circle*{1}}

\put(8,4){\large $2$}
\put(16,4){\large $3$}
\put(4,13){\large $1$}
\put(15,13){\large $3$}
\put(21,14){\large $2$}
\put(3,25){\large $3$}
\put(17,24){\large $1$}

\put(-2,5){\large $12$}
\put(23,25){\large $12$}

\end{picture}
\end{center}

\caption{The Boolean algebra $G = B_{3}$
with its classical $R$-labeling
$\lambda(I \rightarrow I \cupdot \{i\}) = i$,
and the two restricted digraphs
$G_{S}$ and $G_{T}$, where
$S = \{\{1\}, \{1,3\}\}$
and
$T = \{\{2\}, \{3\}, \{1,2\}, \{2,3\}\}$.
Observe that $G_{S}$ has no falling paths,
whereas $G_{T}$ has two falling paths
of lengths $2$ and $3$.
We have
$\tilde{f}_{G_S}(q) = 0$
and 
$\tilde{f}_{G_T}(q) = q + q^2$,
implying
$\tilde{f}_{G_S}(-1) = 0$
and
$\tilde{f}_{G_R}(-1) = 0$.
}
\label{figure_three}
\end{figure}

We can now formulate Alexander duality for balanced digraphs.
See Figure~\ref{figure_three} for an illustration of this theorem.
\begin{theorem}
[Alexander duality for balanced digraphs]
Let $G$ be a balanced acyclic digraph that satisfies the parity
condition. Let the vertex set have the partition
$V = S \cupdot T \cupdot \{\hz,\ho\}$. Then the falling paths
in the two restricted digraph $G_{S}$ and~$G_{T}$
satisfy the identity
$$
    \widetilde{f}_{G_{S}}(-1)
  =
    (-1)^{\ell(G) - 1}
  \cdot
    \widetilde{f}_{G_{T}}(-1)  .    $$
\label{theorem_Alexander}
\end{theorem}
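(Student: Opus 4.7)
The plan is to translate $\widetilde{f}_{G_{S}}(-1)$ into a signed sum over paths in $G$ itself, and then proceed by a two-level induction. Call a path $p$ in $G$ from $\hz$ to $\ho$ \emph{$S$-conforming} if every intermediate vertex of $p$ lying in $S$ is a descent position of $p$ and every intermediate vertex lying in $T$ is an ascent position. Unpacking the construction of $G_S$, a falling path $p^{\prime}$ of length $j$ in $G_S$ is in bijection with an $S$-conforming path $p$ in $G$ with exactly $j-1$ intermediate vertices in $S$; these are the concatenation points of the rising sub-paths composing $p^{\prime}$. Writing $Z_S(p)$ for the set of positions of intermediate $S$-vertices of $p$, so that $\ell(p^{\prime})-1=|Z_S(p)|$, this gives
\begin{equation*}
\widetilde{f}_{G_{S}}(-1) = \sum_{p \text{ $S$-conforming}} (-1)^{|Z_S(p)|},
\qquad
\widetilde{f}_{G_{T}}(-1) = \sum_{p \text{ $T$-conforming}} (-1)^{|Z_T(p)|}.
\end{equation*}
The parity condition yields $|Z_S(p)|+|Z_T(p)| = \ell(p)-1 \equiv \ell(G)-1 \pmod 2$, so the claim reduces to
\begin{equation*}
\sum_{p \text{ $S$-conforming}} (-1)^{|Z_S(p)|} = \sum_{p \text{ $T$-conforming}} (-1)^{|Z_S(p)|}.
\end{equation*}

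I would prove this by a primary induction on $|V(G)|$ and a secondary induction on $|T|$. The primary base $|V(G)|=2$ is trivial. For the secondary base $T=\emptyset$, the graph $G_S$ coincides with $G$, while $G_T$ has only edges derived from rising paths of $G$, each of length one in $G_T$ and hence trivially falling. Thus $\widetilde{f}_{G_T}(-1)$ equals the number of rising paths of $G$, which by balancedness equals the number of falling paths; the parity condition then converts the latter into $(-1)^{\ell(G)-1}\widetilde{f}_G(-1)=(-1)^{\ell(G)-1}\widetilde{f}_{G_S}(-1)$, as required.

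For the secondary step, pick $v\in S$ and set $S^{\prime}=S\setminus\{v\}$, $T^{\prime}=T\cup\{v\}$. Paths not through $v$ contribute identically to $\widetilde{f}_{G_S}(-1)$ and $\widetilde{f}_{G_{S^{\prime}}}(-1)$, since the conforming condition on non-$v$ intermediate vertices is unchanged. For paths through $v$, only the required ascent/descent at $v$ flips. Decomposing such a $p$ as $p_1 \cdot p_2$ at $v$ and summing over the two free choices at $v$ makes the sum factor, producing
\begin{equation*}
\widetilde{f}_{G_{S^{\prime}}}(-1) - \widetilde{f}_{G_{S}}(-1) = \widetilde{f}_{[\hz,v]_{S_1}}(-1)\cdot \widetilde{f}_{[v,\ho]_{S_2}}(-1),
\end{equation*}
where $S_1,S_2$ denote the restrictions of $S$ to the interiors of $[\hz,v]$ and $[v,\ho]$; the analogous computation for $T$ yields the same shape of product with the opposite sign. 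Applying the primary induction hypothesis to the proper subintervals $[\hz,v]$ and $[v,\ho]$, together with $\ell([\hz,v])+\ell([v,\ho])\equiv \ell(G)\pmod 2$, produces exactly the sign $(-1)^{\ell(G)-1}$ required to promote the identity from $(S,T)$ to $(S^{\prime},T^{\prime})$.

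The main obstacle is ensuring that both the balanced condition and the parity condition descend to every subinterval, so that the primary induction hypothesis actually applies. Balancedness descends trivially because the condition is already phrased per interval. The parity condition descends by the standard argument: any two paths $\hz\to v$, extended by a common completion $v\to\ho$, become paths $\hz\to\ho$ and therefore must share the same parity modulo~$2$. Beyond these inheritance checks, the remainder of the proof is careful sign-bookkeeping in the decomposition $|Z_S(p)|=|Z_S(p_1)|+|Z_S(p_2)|+\mathbf{1}[v\in S]$ and its $T$-analogue, keeping track of the flip $|Z_{S^\prime}(p)|=|Z_S(p)|-1$ at $v$.
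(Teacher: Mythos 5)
Your proposal is correct and follows essentially the same route as the paper: the translation of $\widetilde{f}_{G_{S}}(-1)$ into a signed sum over paths $p$ of $G$ with $\Des(p)\subseteq S$ and $\Asc(p)\subseteq T$ weighted by $(-1)^{|i(p)\cap S|}$, the parity reduction, and the double induction with the factorization of paths at a pivot vertex are exactly the content of Proposition~\ref{proposition_Alexander} and its application in the paper. The only cosmetic differences are that you run the primary induction on $|V(G)|$ rather than on the longest path length and the secondary induction by moving vertices from $S$ into $T$ rather than growing $S$ from $\emptyset$.
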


Before proving this theorem, we must establish one more result.
For a path $p = (e_{1}, \ldots, e_{k})$
in the digraph $G$, 
recall that $i(p)$ is
the set of all interior vertices of the path,
that is,
$i(p) = \{\head(e_{1}), \ldots, \head(e_{k-1})\}$.
Note that $|i(p)| = \ell(p) -1$.
Furthermore, let
$\Asc(p)$ and $\Des(p)$ denote the set of vertices where
the path $p$ has ascents, respectively, descents, that is,
\begin{align*}
\Asc(p)
& =
\{ \head(e_{i}) \: : \: \lambda(e_{i}) \sim \lambda(e_{i+1})\} , \\
\Des(p)
& =
\{ \head(e_{i}) \: : \: \lambda(e_{i}) \not\sim \lambda(e_{i+1})\} .
\end{align*}
Directly
$i(p)$ is the disjoint union of $\Asc(p)$ and $\Des(p)$.

\begin{proposition}
Let $G$ be a bounded labeled acyclic digraph 
such that in every interval the number of rising paths equals
the number of falling paths.
Let the vertex set of $G$ have the partition
$V = S \cupdot T \cupdot \{\hz,\ho\}$. 
Then the following two sums are equal:
$$
\sum_{\substack{p \\ \Asc(p) \subseteq T \\ \Des(p) \subseteq S}}
(-1)^{|i(p) \cap S|} 
  =
\sum_{\substack{p \\ \Asc(p) \subseteq S \\ \Des(p) \subseteq T}}
(-1)^{|i(p) \cap S|} .
$$
\label{proposition_Alexander}
\end{proposition}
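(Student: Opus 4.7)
My plan is to proceed by a double induction: an outer induction on $|V(G)|$, which lets me invoke the proposition on every proper subinterval of $[\hz, \ho]$ (with the partition induced by $S$ and $T$), together with an inner induction on $|S|$ for fixed $G$. Write $\Sigma_1$ and $\Sigma_2$ for the two sums in the statement, subscripted by partition or subinterval when needed. In the inner base case $S = \emptyset$, $\Des(p) \subseteq \emptyset$ forces $p$ rising while $\Asc(p) \subseteq \emptyset$ forces $p$ falling, so $\Sigma_1$ and $\Sigma_2$ simply count the rising and falling paths from $\hz$ to $\ho$; equality is then immediate from the balanced hypothesis on $[\hz, \ho]$.

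For the inner step, pick $z \in S$ and put $S' = S \setminus \{z\}$, $T' = T \cup \{z\}$, so $|S'| < |S|$. I split every path $p \colon \hz \to \ho$ by whether it visits $z$. Paths with $z \notin i(p)$ contribute identically to $\Sigma_1^{(S,T)}$ and $\Sigma_1^{(S',T')}$: the two sets of ascent/descent constraints coincide on such $p$, and $(-1)^{|i(p) \cap S|} = (-1)^{|i(p) \cap S'|}$ since $z \notin i(p)$. Paths through $z$ factor uniquely as $p = p_1 \cdot p_2$ with $p_1 \colon \hz \to z$ and $p_2 \colon z \to \ho$; exactly one of ``ascent at $z$'' or ``descent at $z$'' holds, according to whether the last label of $p_1$ is $\sim$ or $\not\sim$ to the first label of $p_2$. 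A descent at $z$ places $p$ in $\Sigma_1^{(S,T)}$ but not $\Sigma_1^{(S',T')}$, and an ascent at $z$ does the reverse. Using $(-1)^{|i(p) \cap S|} = -(-1)^{|i(p) \cap S'|}$ for $z \in i(p)$ (since $z \in S$ and $z \notin S'$), one checks that both through-$z$ subcases contribute with the same overall sign to $\Sigma_1^{(S,T)} - \Sigma_1^{(S',T')}$. Summing the $\sim$ and $\not\sim$ cases together therefore removes the junction constraint at $z$ entirely and factors:
\[
\Sigma_1^{(S,T)} - \Sigma_1^{(S',T')} = -\,\Sigma_1^{[\hz,z]} \cdot \Sigma_1^{[z,\ho]},
\]
where the sub-interval sums are taken with respect to the partitions induced by $S$ and $T$ (and $z$, now an endpoint of each subinterval, drops out of either computation). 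A parallel calculation gives $\Sigma_2^{(S,T)} - \Sigma_2^{(S',T')} = -\,\Sigma_2^{[\hz,z]} \cdot \Sigma_2^{[z,\ho]}$.

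By the outer inductive hypothesis applied to the proper subintervals $[\hz, z]$ and $[z, \ho]$, we have $\Sigma_1 = \Sigma_2$ on each, so the two boundary products agree. Subtracting the two displayed identities gives
\[
\Sigma_1^{(S,T)} - \Sigma_2^{(S,T)} = \Sigma_1^{(S',T')} - \Sigma_2^{(S',T')},
\]
and this vanishes by the inner inductive hypothesis since $|S'| < |S|$, completing both inductions.

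The delicate step is the sign accounting at the junction vertex $z$: one must verify that the $-1$ arising from $z \in S$ combines with the $\sim$/$\not\sim$ junction dichotomy (which governs whether a through-$z$ path belongs to the $(S,T)$ sum or the $(S',T')$ sum) so that the two subcases contribute with the \emph{same} sign and can be combined into an unconstrained product of sub-interval sums; this is precisely what enables the outer induction on subintervals to close the argument.
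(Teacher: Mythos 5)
Your argument is correct and is essentially the paper's own proof: the same double induction (outer over the size of the digraph so that the claim may be invoked on the proper subintervals $[\hz,z]$ and $[z,\ho]$, inner over $S$ one element at a time), with the identical key step of merging the ascent-at-$z$ and descent-at-$z$ cases into an unconstrained sum over paths through $z$ that factors as a product over the two subintervals. The only cosmetic differences are that the paper inducts on the length of the longest path rather than on $|V(G)|$ and phrases the inner step as adjoining an element to $S$ rather than deleting one.
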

\begin{proof}
Let $\A(S)$ and $\B(S)$ denoted the left-hand side of
the identity, respectively, the right-hand side.
The proof is by double induction.
First we induct over the longest path in the digraph.
Here the induction base is $\ell(G) = 1$,
that is, the graph consists only of the source and the sink.
Each path has length $1$ and is both rising and falling.
Thus the statement is immediate.

Now assume that the statement holds for all digraphs of length
less than $\ell(G)$. 
We induct on the set~$S$.
The induction basis is when $S$ is empty.
Then $\A(\emptyset)$ and $\B(\emptyset)$
are the number of rising, respectively, falling chains
in the graph $G$. The balanced condition
implies that they are equal, completing the induction basis.

For the induction step, assume that $\A(S) = \B(S)$ for a set $S$.
We will prove it for $S \cup \{x\}$ 
where $x$ is an element not in $S$.
Observe that
\begin{align*}
\A(S \cup \{x\}) - \A(S)
& =
\sum_{\substack{p \\ \Asc(p) \subseteq T - \{x\} \\ x \in \Des(p) \subseteq S \cup \{x\}}}
(-1)^{|i(p) \cap (S \cup \{x\})|}
-
\sum_{\substack{p \\ x \in \Asc(p) \subseteq T \\ \Des(p) \subseteq S}}
(-1)^{|i(p) \cap S|} \\
& =
-
\sum_{\substack{p \\ \Asc(p) \subseteq T - \{x\} \\ x \in \Des(p) \subseteq S \cup \{x\}}}
(-1)^{|i(p) \cap S|}
-
\sum_{\substack{p \\ x \in \Asc(p) \subseteq T \\ \Des(p) \subseteq S}}
(-1)^{|i(p) \cap S|} .
\end{align*}
Combining these two sums we obtain a sum over all paths $p$
through the vertex $x$ such that
$\Asc(p) - \{x\} \subseteq T$
and
$\Des(p) - \{x\} \subseteq S$.
That is, there is no condition on the path at the vertex $x$.
Hence any such path is the concatenation of a path $p_{1}$
in $[\hz, x]$ and a path $p_{2}$ in $[x, \ho]$.
Using that
$|i(p) \cap S| = |i(p_{1}) \cap S| + |i(p_{2}) \cap S|$,
the difference $\A(S \cup \{x\}) - \A(S)$ is given by
\begin{align*}
\A(S \cup \{x\}) - \A(S)
& =
 -
\sum_{\substack{p_{1} \\ \Asc(p_{1}) \subseteq T \\ \Des(p_{1}) \subseteq S}}
(-1)^{|i(p_{1}) \cap S|} 
\cdot
\sum_{\substack{p_{2} \\ \Asc(p_{2}) \subseteq T \\ \Des(p_{2}) \subseteq S}}
(-1)^{|i(p_{2}) \cap S|}
\\
& =
- \A_{[\hz,x]}(S \cap (\hz,x)) \cdot \A_{[x,\ho]}(S \cap (x,\ho)) ,
\end{align*}
where the first sum is over paths $p_{1}$ in $[\hz,x]$
and the second sum is over paths $p_{2}$ in $[x,\ho]$.
By applying the first induction hypothesis to the smaller
digraphs $[\hz,x]$ and $[x,\ho]$ we have
\begin{align*}
\A(S \cup \{x\}) - \A(S)
& =
- \B_{[\hz,x]}(S \cap (\hz,x)) \cdot \B_{[x,\ho]}(S \cap (x,\ho)) \\
& =
 -
\sum_{\substack{p_{1} \\ a(p_{1}) \subseteq S \\ d(p_{1}) \subseteq T}}
(-1)^{|i(p_{1}) \cap S|} 
\cdot
\sum_{\substack{p_{2} \\ a(p_{2}) \subseteq S \\ d(p_{2}) \subseteq T}}
(-1)^{|i(p_{2}) \cap S|}
\\
& =
-
\sum_{\substack{p \\ x \in i(p) \\ \Asc(p) - \{x\} \subseteq S \\ \Des(p) - \{x\} \subseteq T}}
(-1)^{|i(p) \cap S|} \\
& =
\sum_{\substack{p \\ x \in \Asc(p) \subseteq S \cup \{x\} \\ \Des(p) \subseteq T - \{x\}}}
(-1)^{|i(p) \cap (S \cup \{x\})|} 
-
\sum_{\substack{p \\ \Asc(p) \subseteq S \\ x \in \Des(p) \subseteq T}}
(-1)^{|i(p) \cap S|} \\
& =
\B(S \cup \{x\}) - \B(S) .
\end{align*}
Hence
$\A(S \cup \{x\}) = \B(S \cup \{x\})$ completing the induction.
\end{proof}

The statement of Proposition~\ref{proposition_Alexander}
is not symmetric in $S$ as the following
corollary illustrates.  
Also note the assumptions 
in Proposition~\ref{proposition_Alexander}
are not as strict as the balanced condition.

\begin{corollary}
Let $G$ be a labeled acyclic digraph 
such that in every interval the number of rising paths equals
the number of falling paths.
Then following two alternating sums agree:
$$
\sum_{\text{$p$ rising}}
(-1)^{\ell(p)}
  =
\sum_{\text{$p$ falling}}
(-1)^{\ell(p)}.
$$
\label{corollary_Alexander}
\end{corollary}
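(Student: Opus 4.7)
The plan is to derive the corollary directly from Proposition~\ref{proposition_Alexander} by making a degenerate choice of the partition $V = S \cupdot T \cupdot \{\hz,\ho\}$. The natural choice is to take $T = \emptyset$, so that $S = V - \{\hz,\ho\}$ consists of all interior vertices of $G$. With this choice, the set-theoretic conditions in the proposition collapse to purely local conditions at each interior vertex.

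First I would examine what the right-hand side of Proposition~\ref{proposition_Alexander} becomes. The condition $\Asc(p) \subseteq S$ is automatic since $\Asc(p) \subseteq i(p) \subseteq V - \{\hz,\ho\} = S$, while $\Des(p) \subseteq T = \emptyset$ forces $\Des(p) = \emptyset$, which is exactly the condition that $p$ is a rising path. Similarly, on the left-hand side, $\Asc(p) \subseteq T = \emptyset$ forces $p$ to be falling, and $\Des(p) \subseteq S$ is automatic. Moreover, for any such path $i(p) \cap S = i(p)$, so the sign factor is $(-1)^{|i(p)|} = (-1)^{\ell(p)-1}$.

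Substituting into the proposition gives
\[
\sum_{p \text{ falling}} (-1)^{\ell(p)-1}
   =
\sum_{p \text{ rising}} (-1)^{\ell(p)-1}  ,
\]
and multiplying both sides by $-1$ yields the claimed identity. There is essentially no obstacle here once Proposition~\ref{proposition_Alexander} is in hand; the only thing to verify is that the hypothesis of the proposition---namely, that in every interval the number of rising paths equals the number of falling paths---matches the hypothesis of the corollary verbatim, which it does. Thus the corollary is an immediate specialization of the proposition and the entire proof consists in recognizing the correct partition.
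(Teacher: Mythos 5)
Your proof is correct and is exactly the paper's argument: the paper's entire proof of this corollary is the single line ``Take $T = \emptyset$ in Proposition~\ref{proposition_Alexander},'' and your write-up simply spells out the routine verification that this specialization yields the stated identity (up to an overall sign).
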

\begin{proof}
Take $T = \emptyset$ in Proposition~\ref{proposition_Alexander}.
\end{proof}

\begin{proof}[Proof of Theorem~\ref{theorem_Alexander}.]
Expanding $\widetilde{f}_{G_{S}}(-1)$ we have
\begin{align*}
\widetilde{f}_{G_{S}}(-1)
& =
\sum_{p^{\prime}} (-1)^{\ell(p^{\prime})-1}  ,
\end{align*}
where the sum is over all falling paths $p^{\prime}$ in $G_{S}$.
By replacing each edge in the path $p^{\prime}$ with
the associated rising path in $G$, we obtain a path $p$ in
the digraph $G$ such that
$\Asc(p) \subseteq T$
and
$\Des(p) \subseteq S$.
Hence
$$
\widetilde{f}_{G_{S}}(-1)
  =
\sum_{\substack{p \\ \Asc(p) \subseteq T \\ \Des(p) \subseteq S}}
(-1)^{|i(p) \cap S|} .
$$
By a symmetric argument we have
$$
   (-1)^{\ell(G) - 1} \cdot
   \widetilde{f}_{G_{T}}(-1) 
  =
   (-1)^{\ell(G) - 1} \cdot
\sum_{\substack{p \\ \Asc(p) \subseteq S \\ \Des(p) \subseteq T}}
(-1)^{|i(p) \cap T|} 
  =
\sum_{\substack{p \\ \Asc(p) \subseteq S \\ \Des(p) \subseteq T}}
(-1)^{|i(p) \cap S|} ,
$$
where we used the parity condition
that $|i(p) \cap S| + |i(p) \cap T| = i(p) \equiv \ell(G) - 1 \bmod 2$.
The duality result now follows from Proposition~\ref{proposition_Alexander}.
\end{proof}

\section{Application to Bruhat graphs}
\label{section_Bruhat}

An important application of balanced labeled graphs
is to the family of Bruhat graphs.
In this section we give a brief overview of Bruhat graphs.
For a more complete description of Coxeter systems,
we refer the reader to
the book of 
Bj\"orner and Brenti~\cite{Bjorner_Brenti}.

Let $(W,S)$ be a Coxeter system,
where $W$ denotes a (finite or infinite) Coxeter group
with generators~$S$ and   
$\ell(u)$ denotes the length of
a group element $u$.
Let $T$ be the set of reflections, that is,
$T = \{w \cdot s \cdot w^{-1} \:\: : \:\:
         s \in S, w \in W \}$.
The Bruhat graph has the group~$W$ as its vertex set
and its set of labels $\Lambda$ is the set of reflections~$T$.
The edges and their labeling are defined as follows.
There is a directed edge from $u$ to $v$ labeled $t$
if $u \cdot t = v$ and $\ell(u) < \ell(v)$.
The underlying poset of the Bruhat graph is called
the (strong) Bruhat order. It is important to note that every
interval of the Bruhat order is Eulerian,
that is,
every interval $[x,y]$ has M\"obius function
given by
$\mu(x,y) = (-1)^{\rho(y) - \rho(x)}$,
where $\rho$ denotes the rank function.

The motivation for studying the $\cd$-index of Bruhat
graphs is that the $\cd$-index of the interval $[u,v]$
determines the Kazhdan--Lusztig polynomial 
$P_{u,v}(q)$.  See~\cite[Section~3]{Billera_Brenti}.
The first step is to define the $R$-polynomials $R_{u,v}(q)$.
See~\cite[Theorem~5.1.1]{Bjorner_Brenti} for further details.
\begin{theorem}
There is a unique family of polynomials
$\{R_{u,v}(q)\}_{u,v \in W}$ with integer coefficients
satisfying the following conditions:
\begin{itemize}
\item[(i)]  $R_{u,v} = 0$ if $u \not\leq v$,
\item[(ii)] $R_{u,v} = 1$ if $u = v$, and
\item[(iii)] If $s \in S$ and $\ell(v \cdot s) < \ell(v)$
then
$$    R_{u,v}(q)
   =
        \begin{cases}
           R_{us,vs}(q) 
           & \text{ if } \ell(u \cdot s) < \ell(u), \\
           q \cdot R_{us,vs}(q) + (q-1) \cdot R_{u,vs}(q) 
           & \text{ if } \ell(u \cdot s) > \ell(u).
        \end{cases}
$$
\end{itemize}
\end{theorem}
A combinatorial interpretation of the $R$-polynomials
is given by Dyer~\cite{Dyer}.
See also~\cite[Proposition~5.3.1 and Theorem~5.3.4]{Bjorner_Brenti}.
On the set of reflections there exist 
conditions for a total ordering.
An ordering satisfying these conditions is called
a reflection ordering. The fact that a reflection ordering
exists follows
from~\cite[Proposition~5.2.1]{Bjorner_Brenti}.
Dyer's interpretation is
$$     R_{u,v}(q)
   =
       q^{\ell(u,v)/2}
     \cdot
       \widetilde{R}_{u,v}
           \left(
               q^{1/2} - q^{-1/2}
           \right)  ,   $$
where the
$\widetilde{R}$-polynomials
are defined in equation~\eqref{equation_R_tilde_F_tilde}
with respect to a reflection ordering of the set of reflections $T$.

We can now state and give a concise
proof of the first main result from~\cite{Billera_Brenti},
namely the existence of the complete $\cd$-index of the
Bruhat order. We prefer to call it {\em the $\cd$-index
of the Bruhat graph} to distinguish it from the
$\cd$-index of the Bruhat order.

\begin{theorem}[Billera--Brenti]
For an interval $[u,v]$ in the Bruhat order, where $u < v$,
the following three conditions hold:
\begin{itemize}
\item[(i)]
The interval $[u,v]$ in the Bruhat graph has
a $\cd$-index $\Psi([u,v])$.

\item[(ii)]
Restricting the $\cd$-index $\Psi([u,v])$
to those terms 
of degree $\ell(v) - \ell(u) - 1$ equals the
$\cd$-index of the graded poset $[u,v]$.

\item[(iii)]
The degree of a term in the $\cd$-index $\Psi([u,v])$
is less than or equal to $\ell(v) - \ell(u) - 1$
and has the same parity as $\ell(v) - \ell(u) - 1$.
\end{itemize}
\label{theorem_Billera_Brenti}
\end{theorem}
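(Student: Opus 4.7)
The plan is to apply Theorem~\ref{theorem_cd} to obtain (i), and then to derive (ii) and (iii) as structural consequences. Fix a reflection ordering $<$ on $T$, and view the Bruhat graph as a labeled acyclic digraph in the sense of Section~\ref{section_labeled_graphs} with labels $\Lambda = T$ and relation given by $<$. To invoke Theorem~\ref{theorem_cd}(iii), it suffices to verify the balanced condition $\widetilde{r}_{u,v}(q) = \widetilde{f}_{u,v}(q)$ for all $u < v$. I would establish this from three inputs of Coxeter theory: Dyer's formula $R_{u,v}(q) = q^{\ell(u,v)/2}\widetilde{R}_{u,v}(q^{1/2}-q^{-1/2})$ stated above; the intrinsic, ordering-free recursive definition of $R_{u,v}(q)$, which forces $\widetilde{R}_{u,v}$ to be independent of the choice of reflection ordering; and the fact that the reverse of a reflection ordering is again a reflection ordering. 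Applying the first two facts to both an ordering and its reverse, and observing that reversing the ordering swaps rising and falling paths, yields $\widetilde{r}_{u,v} = \widetilde{f}_{u,v}$. Theorem~\ref{theorem_cd} then delivers $\Psi([u,v]) \in \zcd$.

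For (iii), I would appeal to the standard fact that $\ell(wt) - \ell(w)$ is a nonzero odd integer for every reflection $t \in T$. Every directed edge of the Bruhat graph thus raises Coxeter length by a positive odd amount, so any path of length $k$ from $u$ to $v$ satisfies $k \leq \ell(v) - \ell(u)$ with $k \equiv \ell(v) - \ell(u) \pmod{2}$. Because such a path contributes an $\ab$-monomial of degree $k-1$ and each $\cd$-monomial has equal $\cd$- and $\ab$-degree, every $\cd$-monomial in $\Psi([u,v])$ has degree at most $\ell(v) - \ell(u) - 1$ with matching parity. For (ii), I would isolate the homogeneous component of $\Psi([u,v])$ of top degree $\ell(v) - \ell(u) - 1$, which by the previous analysis is fed entirely by paths of maximal length, i.e., the saturated chains of the Bruhat order. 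A classical theorem (Edelman, extended by Dyer) asserts that any reflection ordering, restricted to cover relations, is an $R$-labeling of the graded Bruhat poset; hence the saturated-chain contribution reproduces the classical $\ab$-index of the Eulerian interval $[u,v]$, and by Eulerianness coincides with its classical $\cd$-index.

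The main obstacle is the balanced condition in (i); once secured, parts (ii) and (iii) are essentially bookkeeping involving parity and the length of maximal paths. Verifying balanced reduces to the two Coxeter-theoretic facts noted above, namely the intrinsic nature of $\widetilde{R}_{u,v}$ and the closure of reflection orderings under reversal; both are standard in~\cite{Bjorner_Brenti}. The virtue of this route is that it bypasses direct manipulation of the Kazhdan--Lusztig recursions entirely, deferring to the general balanced-digraph machinery built up in Theorem~\ref{theorem_cd}.
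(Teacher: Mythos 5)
Your proposal is correct and follows essentially the same route as the paper: part (i) via Theorem~\ref{theorem_cd} using the ordering-independence of the $\widetilde{R}$-polynomials together with the fact that the reverse of a reflection ordering is again a reflection ordering, part (ii) from the restriction of a reflection ordering to the cover relations being an $R$-labeling, and part (iii) from the parity (bipartite) structure of the Bruhat graph. The paper's proof is merely a terser statement of the same three ingredients, so your write-up amounts to filling in the details it leaves implicit.
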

\begin{proof}
The reverse of a reflection ordering is also reflection ordering.
Hence the number of rising chains of length $k$
is equal to the number of falling chains of the same length.
Thus part $(i)$ follows from Theorem~\ref{theorem_cd}.
Part (ii) follows from the fact that 
when one restricts
the labeling to the poset structure of one interval $[u,v]$,
that is, only
considering the cover relations, the reflection ordering
is an $R$-labeling.
Part (iii) follows from the fact that the Bruhat graph
is bipartite.
\end{proof}

\section{Quasisymmetric functions}
\label{section_quasisymmetric_functions}

In this section
we review the basic set-up surrounding the 
ring of quasisymmetric functions.
For a bounded labeled digraph we introduce the  {\em rising} and
{\em falling quasisymmetric functions} and relate these
with a shift of the aforementioned rising and falling
polynomials.
We show the rising
and falling quasisymmetric functions  are Hopf algebra homomorphisms
from the Hopf algebra formed by the linear span of bounded
labeled acyclic digraphs
to the quasisymmetric functions.
We then reformulate Theorem~\ref{theorem_cd} in terms of
the peak algebra.

The connection between flag $f$-vectors of graded posets
and quasisymmetric functions was developed
by Ehrenborg~\cite{Ehrenborg_Hopf}.
The companion theory for edge labeled posets
and quasisymmetric functions
is due to Bergeron and Sottile~\cite{Bergeron_Sottile}.
The peak algebra was introduced by Stembridge~\cite{Stembridge}.
The link between the peak algebra
and the quasisymmetric functions of Eulerian posets was made
by Bergeron, Mykytiuk,
Sottile and van Willigenburg
in~\cite{Bergeron_Mykytiuk_Sottile_van_Willigenburg}.

Let $\Sigma_{n}$ denote the set of all
compositions of $n$, that is, all sequences 
$\alpha = (\alpha_{1},\alpha_{2},\ldots,\alpha_{m})$
of positive integers such that
$\alpha_{1} + \alpha_{2} + \cdots + \alpha_{m} = n$.
We form $\Sigma_{n}$ into a poset by
defining the cover relation
$(\alpha_{1},\ldots,\alpha_{i} + \alpha_{i+1},\ldots,\alpha_{m})
   \prec
 (\alpha_{1},\ldots,\alpha_{i}, \alpha_{i+1},\ldots,\alpha_{m})$.
Observe that the minimal element is the composition $(n)$
and the maximal element is the composition $(1,1, \ldots, 1)$.
In fact, for $n \geq 1$, the poset~$\Sigma_{n}$ is isomorphic
to the Boolean algebra~$B_{n-1}$. Note also that $\Sigma_{0}$
consists of the unique composition of the integer $0$.
Especially, note that each composition $\alpha$
in the poset $\Sigma_{n}$
has a unique complement that we denote by $\alpha^{c}$.
To find the complement, write the composition using
commas, plus signs and $1$'s, and exchange the commas and plus signs.
As an example, the complement of
$(3,1,2) = (1+1+1,1,1+1)$ is $(1,1,1+1+1,1) = (1,1,3,1)$.
Finally, let $\Sigma = \cup_{n \geq 0} \Sigma_{n}$.

A function $f$ in the ring
$\Zzz[[w_{1},w_{2},\ldots]]$ of
power series with bounded degree
is called {\em quasisymmetric} if for any sequence
of positive integers $\alpha_{1},\alpha_{2},\ldots,\alpha_{m}$ we have
$$
\left[w_{i_1}^{\alpha_{1}}\cdots w_{i_k}^{\alpha_{m}} \right] f
    =
\left[w_{j_1}^{\alpha_{1}}\cdots w_{j_k}^{\alpha_{m}} \right] f
$$
whenever $i_{1} < \cdots < i_{m}$ and $j_{1} < \cdots < j_{m}$,
and where
$[w^{\alpha}]f$ denotes the coefficient of~$w^{\alpha}$ in $f$. Denote
by $\QSym \subseteq  \Zzz[[w_{1},w_{2},\ldots]]$
the ring of quasisymmetric
functions.

For a composition 
$\alpha = (\alpha_{1}, \ldots, \alpha_{m})$
the {\em monomial quasisymmetric function}
$M_{\alpha}$ is given by
$$
M_{\alpha}
 =
\sum_{i_{1} < \cdots < i_{m}}
  w_{i_{1}}^{\alpha_{1}} \cdots w_{i_{m}}^{\alpha_{m}}.
$$
The monomial quasisymmetric functions $M_{\alpha}$
indexed by the compositions $\alpha$ in $\Sigma$ form a basis for the 
quasisymmetric functions.
A different basis
is given by the {\em fundamental quasisymmetric functions}
$L_{\alpha}$.
For fixed composition $\alpha$, 
the quasisymmetric function
$L_{\alpha}$  is defined by the sum
$$
L_{\alpha} = \sum_{\alpha \leq \beta} M_{\beta} .
$$
The quasisymmetric functions also form a Hopf algebra
where the coproduct is given by
$$  \Delta(M_{(\alpha_{1}, \ldots, \alpha_{m})})
  =
    \sum_{i=0}^{m}
           M_{(\alpha_{1}, \ldots, \alpha_{i})}
       \tensor
           M_{(\alpha_{i+1}, \ldots, \alpha_{m})}  . $$
A different way to view this coproduct is that
it is equivalent to the substitution
$$   \Delta(f(w_{1}, w_{2}, \ldots))
   =
     f(w_{1} \tensor 1, w_{2} \tensor 1, \ldots,
       1 \tensor w_{1}, 1 \tensor w_{2}, \ldots)   . $$
Malvenuto and Reutenauer~\cite{Malvenuto_Reutenauer}
defined an automorphism $\omega$ on
quasisymmetric functions by
the relation
$$    \omega(L_{\alpha}) = L_{\alpha^{c}}   . $$
The involution $\omega$ on $\QSym$ corresponds to
the involution $u \longmapsto \overline{u}$ in $\zab$.
The antipode on the Hopf algebra on quasisymmetric functions
is given by
$$    S(M_{\alpha})
   =
        (-1)^{n}
      \cdot
        \omega(M_{\alpha^{*}})  , $$
where $\alpha = (\alpha_{1}, \ldots, \alpha_{m})$
is a composition of $n$ and
$\alpha^{*}$ denotes the reverse composition
$(\alpha_{m}, \ldots, \alpha_{1})$.

For a sequence of labels
$\lambda = (\lambda_{1}, \lambda_{2}, \ldots, \lambda_{n})$
of length $n$, we define two compositions
$\rho^{R}(\lambda)$ and $\rho^{F}(\lambda)$ of $n$.
The composition
$\rho^{R}(\lambda)$ records
the rising runs in the sequence $\lambda$,
that is,
$\rho^{R}(\lambda) = (\rho_{1}, \rho_{2}, \ldots, \rho_{m})$
if
\begin{align*}
      \lambda_{1}
    \sim
      \cdots
    \sim
      \lambda_{\rho_{1}}
&    \not\sim
      \lambda_{\rho_{1} + 1}
    \sim
      \cdots
    \sim
      \lambda_{\rho_{1} + \rho_{2}} \\
& \not\sim
      \lambda_{\rho_{1} + \rho_{2} + 1}
    \sim
      \cdots
    \sim
      \lambda_{\rho_{1} + \cdots + \rho_{m-1}}
\\
& \not\sim
      \lambda_{\rho_{1} + \cdots + \rho_{m-1} + 1}
    \sim
      \cdots
    \sim
      \lambda_{n},  
\end{align*}
where $\sum_{i = 1}^m \rho_i = n$.
Similarly, let
$\rho^{F}(\lambda)$ record the falling runs in the sequence, that is,
if $\rho^{F} = (\rho_{1}, \rho_{2}, \ldots, \rho_{m})$
we have
\begin{align*}
      \lambda_{1}
    \not\sim
      \cdots
    \not\sim
      \lambda_{\rho_{1}}
&    \sim
      \lambda_{\rho_{1} + 1}
    \not\sim
      \cdots
    \not\sim
      \lambda_{\rho_{1} + \rho_{2}} \\
& \sim
      \lambda_{\rho_{1} + \rho_{2} + 1}
    \not\sim
      \cdots
    \not\sim
      \lambda_{\rho_{1} + \cdots + \rho_{m-1}}
\\
& \sim
      \lambda_{\rho_{1} + \cdots + \rho_{m-1} + 1}
    \not\sim
      \cdots
    \not\sim
      \lambda_{n} .
\end{align*}
Observe that in the poset $\Sigma_{n}$
the two compositions $\rho^{R}(\lambda)$ and $\rho^{F}(\lambda)$ are
complements of each other, that is,
$(\rho^{R}(\lambda))^{c} = \rho^{F}(\lambda)$.

For a bounded labeled digraph $G$ define the 
{\em rising} and {\em falling quasisymmetric
functions} 
by
\begin{equation}
\label{equation_rising_falling_quasisymmetric}
F^{R}(G)
   = 
\sum_{p} 
     L_{\rho^{R}(\lambda(p))}  
\:\:\:\: \text{ and } \:\:\:\:
F^{F}(G)
   = 
\sum_{p} 
     L_{\rho^{F}(\lambda(p))} , 
\end{equation}
where each sum is over all paths $p$ 
from $\hz$ to $\ho$ in the digraph $G$.
Since the two compositions
$\rho^{R}(\lambda)$ and $\rho^{F}(\lambda)$ are
complements, directly we have that the
two quasisymmetric functions are related by
the automorphism~$\omega$, that is,
$$ \omega(F^{R}(G))  =  F^{F}(G))  .  $$

Similar to the notion of the two polynomials
$\widetilde{r}_{x,y}(q)$ and
$\widetilde{f}_{x,y}(q)$, 
for $x \leq y$ 
define
the two polynomials
$\widetilde{R}_{x,y}(q)$ and
$\widetilde{F}_{x,y}(q)$ 
by
\begin{equation}
    \widetilde{R}_{x,y}(q)
   =
    \sum_{p} q^{\ell(p)}  
                                \:\:\:\: \text{ and } \:\:\:\:
    \widetilde{F}_{x,y}(q)
   =
    \sum_{p} q^{\ell(p)}   ,  
\label{equation_R_tilde_F_tilde}
\end{equation}
where the sum ranges over all rising,
respectively falling, paths from $x$ to $y$.
Directly we have the relations
$\widetilde{R}_{x,y}(q) = q \cdot \widetilde{r}_{x,y}(q)$
and
$\widetilde{F}_{x,y}(q) = q \cdot \widetilde{f}_{x,y}(q)$
for $x < y$.

\begin{proposition}
For a bounded labeled digraph $G$ the
two identities hold:
\begin{align}
F^{R}(G) \vrule_{w_{m+1} = w_{m+2} = \cdots = 0}
& =
\sum_{c} 
     \widetilde{R}_{x_{0},x_{1}}(w_{1})
   \cdot
     \widetilde{R}_{x_{1},x_{2}}(w_{2})
   \cdots
     \widetilde{R}_{x_{m-1},x_{m}}(w_{m})   , 
\label{equation_F_R} \\
F^{F}(G) \vrule_{w_{m+1} = w_{m+2} = \cdots = 0}
& =
\sum_{c} 
     \widetilde{F}_{x_{0},x_{1}}(w_{1})
   \cdot
     \widetilde{F}_{x_{1},x_{2}}(w_{2})
   \cdots
     \widetilde{F}_{x_{m-1},x_{m}}(w_{m})   ,
\label{equation_F_F} 
\end{align}
where each sum is over all multichains
$c = \{\hz = x_{0} \leq x_{1} \leq \cdots \leq x_{m} = \ho\}$
of length $m$ in the digraph~$G$.
\label{proposition_m}
\end{proposition}
\begin{proof}
Each side of 
equation~\eqref{equation_F_R}
is a polynomial in $w_{1}, w_{2}, \ldots, w_{n}$,
where $n$ is the length of the longest chain in the
digraph $G$.
Consider the coefficient of the monomial
$w_{i_{1}}^{\alpha_{1}} \cdot w_{i_{2}}^{\alpha_{2}}
                       \cdots w_{i_{m}}^{\alpha_{m}}$
on the right-hand side of equation~\eqref{equation_F_R},
where $\alpha = (\alpha_{1}, \alpha_{2}, \ldots, \alpha_{m})$
is a composition with $m \leq n$ and
$1 \leq i_{1} < i_{2} < \cdots < i_{m} \leq n$.
This counts the number of paths $p = (e_{1}, e_{2}, \ldots, e_{m})$
in the digraph $G$ such that
$\alpha_{1} + \alpha_{2} + \cdots + \alpha_{m} = n$
and
\begin{align*}
  \lambda(e_{1})
    \sim
 &
  \cdots
    \sim
  \lambda(e_{\alpha_{1}}) , \\
  \lambda(e_{\alpha_{1} + 1})
    \sim
 &
  \cdots
    \sim
  \lambda(e_{\alpha_{1} + \alpha_{2}}) , \\
 &
  \:\:\: \vdots 
 \\
  \lambda(e_{\alpha_{1} + \cdots + \alpha_{m-1} + 1})
    \sim
 &
  \cdots
    \sim
  \lambda(e_{\alpha_{1} + \cdots + \alpha_{m}}) , 
\end{align*}
and where the relation between
$\lambda(e_{\alpha_{1} + \cdots + \alpha_{i}})$
and
$\lambda(e_{\alpha_{1} + \cdots + \alpha_{i} + 1})$
is not known.
In other words, this coefficient enumerates the number of paths $p$
such that $\rho^{R}(\lambda(p)) \leq \alpha$.

The coefficient of
$w_{i_{1}}^{\alpha_{1}} \cdot w_{i_{2}}^{\alpha_{2}}
                       \cdots w_{i_{m}}^{\alpha_{m}}$
in the left-hand side of equation~\eqref{equation_F_R}
is the coefficient of
$M_{\alpha}$ in $F^{R}(G)$.  This coefficient is given by
\begin{align*}
[M_{\alpha}] F^{R}(G)
& =
[M_{\alpha}]
\sum_{p} 
     L_{\rho^{R}(\lambda(p))}   \\
& =
[M_{\alpha}]
\sum_{p} 
  \sum_{\rho^{R}(\lambda(p)) \leq \alpha}
     M_{\alpha} .
\end{align*}
This is the number of paths $p$ such that
$\rho^{R}(\lambda(p)) \leq \alpha$, proving
the first identity.
The second identity~\eqref{equation_F_F} follows
by a symmetric argument.
\end{proof}

Proposition~\ref{proposition_m}
can be reformulated as follows.
\begin{proposition}
For a bounded labeled digraph $G$ the two identities hold:
\begin{align}
F^{R}(G)
& =
\lim_{m \longrightarrow \infty}
\sum_{c} 
     \widetilde{R}_{x_{0},x_{1}}(w_{1})
   \cdot
     \widetilde{R}_{x_{1},x_{2}}(w_{2})
   \cdots
     \widetilde{R}_{x_{m-1},x_{m}}(w_{m})   , 
\label{equation_F_R_lim} \\
F^{F}(G)
& =
\lim_{m \longrightarrow \infty}
\sum_{c} 
     \widetilde{F}_{x_{0},x_{1}}(w_{1})
   \cdot
     \widetilde{F}_{x_{1},x_{2}}(w_{2})
   \cdots
     \widetilde{F}_{x_{m-1},x_{m}}(w_{m})   .
\label{equation_F_F_lim} 
\end{align}
\end{proposition}

Define the Cartesian product $G \times H$
of two digraphs $G$ and $H$ to be the digraph
with vertex set $V(G \times H) = V(G) \times V(H)$
and edge set
$E(G \times H) = V(G) \times E(H) \cup E(G) \times V(H)$,
where the edges are defined by
$\tail_{G \times H}((e,y)) = (\tail_{G}(e),y)$,
$\head_{G \times H}((e,y)) = (\head_{G}(e),y)$,
$\tail_{G \times H}((x,e)) = (x,\tail_{H}(e))$ and
$\head_{G \times H}((x,e)) = (x,\head_{H}(e))$.
Furthermore, for the Cartesian product of
labeled digraphs, set 
$\Lambda_{G \times H} = \Lambda_{G} \cup \Lambda_{H}$,
where the relation is defined by
$\lambda \sim \mu$
if and only if one of the following cases hold:
($i$)
$\lambda, \mu \in \Lambda_{G}, \lambda \sim_{\Lambda_{G}} \mu$,
($ii$)
$\lambda \in \Lambda_{G}, \mu \in \Lambda_{H}$,
($iii$)
$\lambda, \mu \in \Lambda_{H}, \lambda \sim_{\Lambda_{H}} \mu$.
Finally, the labels of the Cartesian product are defined by
$\lambda_{G \times H}((e,y)) = \lambda_{G}(e)$
and
$\lambda_{G \times H}((x,e)) = \lambda_{H}(e)$.

Observe that if both of the digraphs $G$ and $H$ are acyclic
then their Cartesian product is acyclic.
Similarly, if both digraphs are locally finite, then so is their product.

\begin{lemma}
For two labeled acyclic digraphs $G$ and $H$,
the $\widetilde{R}$- and $\widetilde{F}$-polynomials
of the Cartesian product $G \times H$ are given by
\begin{align*}
\widetilde{R}_{(x,z), (y,w)}(q)
& =
\widetilde{R}_{x, y}(q) \cdot \widetilde{R}_{z, w}(q) , \\
\widetilde{F}_{(x,z), (y,w)}(q)
& =
\widetilde{F}_{x, y}(q) \cdot \widetilde{F}_{z, w}(q) .
\end{align*}
\end{lemma}
\begin{proof}
A rising chain in $G \times H$
must first have labels from $\Lambda_{G}$
and then labels from $\Lambda_{H}$.
Thus the only way to have a rising chain in the interval
$[(x,z), (y,w)]$ is to first have a rising chain
in $[(x,z), (y,z)] \cong [x,y]$ and then a rising chain
in $[(y,z), (y,w)] \cong [z,w]$. Similarly, a falling
chain must have the labels from $\Lambda_{H}$ first
and then from $\Lambda_{G}$.
\end{proof}

\begin{proposition}
For two labeled acyclic digraphs $G$ and $H$,
the $F^{R}$ and $F^{F}$ quasisymmetric functions
of the interval $[(x,z), (y,w)])$
in the Cartesian product $G \times H$
are given by
\begin{align*}
F^{R}([(x,z), (y,w)])
& =
F^{R}([x,y]) \cdot F^{R}([z,w]) , \\
F^{F}([(x,z), (y,w)])
& =
F^{F}([x,y]) \cdot F^{F}([z,w]) .
\end{align*}
\label{proposition_algebra_map}
\end{proposition}
\begin{proof}
By equation~\eqref{equation_F_R_lim}
we have
\begin{align*}
F^{R}([(x,z), (y,w)])
& =
\lim_{m \longrightarrow \infty}
\sum
     \widetilde{R}_{(x_{0},z_{0}),(x_{1},z_{1})}(w_{1})
   \cdots 
     \widetilde{R}_{(x_{m-1},z_{m-1}),(x_{m},z_{m})}(w_{m})    \\
& =
\lim_{m \longrightarrow \infty}
\sum
     \widetilde{R}_{x_{0},x_{1}}(w_{1})
   \cdots
     \widetilde{R}_{x_{m-1},x_{m}}(w_{m})   \\
&
\hspace*{20 mm}
   \cdot
     \widetilde{R}_{z_{0},z_{1}}(w_{1})
   \cdots
     \widetilde{R}_{z_{m-1},z_{m}}(w_{m}) \\
& =
\lim_{m \longrightarrow \infty}
\left(
\sum
     \widetilde{R}_{x_{0},x_{1}}(w_{1})
   \cdots
     \widetilde{R}_{x_{m-1},x_{m}}(w_{m})  
\right) \\
&
\hspace*{15 mm}
   \cdot
\left(
\sum
     \widetilde{R}_{z_{0},z_{1}}(w_{1})
   \cdots
     \widetilde{R}_{z_{m-1},z_{m}}(w_{m}) 
\right) \\
& =
\left(
\lim_{m \longrightarrow \infty}
\sum
     \widetilde{R}_{x_{0},x_{1}}(w_{1})
   \cdots
     \widetilde{R}_{x_{m-1},x_{m}}(w_{m})  
\right) \\
&
   \cdot
\left(
\lim_{m \longrightarrow \infty}
\sum
     \widetilde{R}_{z_{0},z_{1}}(w_{1})
   \cdots
     \widetilde{R}_{z_{m-1},z_{m}}(w_{m}) 
\right) \\
& =
     F^{R}([x,y])
   \cdot
     F^{R}([z,w])    ,
\end{align*}
where the two first sums are over all
multichains
$(x,z) = (x_{0},z_{0}) \leq (x_{1},z_{1})
       \leq (x_{2},z_{2})
       \leq \cdots
       \leq (x_{m},z_{m}) = (y,w)$
and
the remaining sums are over the multichains
$x = x_{0} \leq x_{1} \leq x_{2} \leq \cdots
   \leq x_{m} = y$ and
$z = z_{0} \leq z_{1} \leq z_{2} \leq \cdots
   \leq z_{m} = w$.
The dual argument gives the second identity.
\end{proof}

\begin{proposition}
For $[x,y]$ an interval
in a labeled acyclic digraph $G$,
\begin{align*}
     \Delta(F^{R}([x,y]))
& =
     \sum_{x \leq z \leq y}
            F^{R}([x,z])) \tensor F^{R}([z,y]))  , \\
     \Delta(F^{F}([x,y]))
& =
     \sum_{x \leq z \leq y}
            F^{F}([x,z])) \tensor F^{F}([z,y]))  .
\end{align*}
\label{proposition_coalgebra_map}
\end{proposition}
\begin{proof}
Using equation~\eqref{equation_F_R_lim} we have
\begin{align*}
\Delta(F^{R}([x,y]))
& =
\lim_{m \longrightarrow \infty}
\sum_{c} 
     \widetilde{R}_{x_{0},x_{1}}(w_{1} \tensor 1)
   \cdots
     \widetilde{R}_{x_{m-1},x_{m}}(w_{m} \tensor 1)
   \\
&
\hspace*{20mm}
  \cdot
     \widetilde{R}_{x_{m},x_{m+1}}(1 \tensor w_{1})
   \cdots
     \widetilde{R}_{x_{2m-1},x_{2m}}(1 \tensor w_{m})   \\
& =
\lim_{m \longrightarrow \infty}
\sum_{x \leq z \leq y}
\left(
\sum_{c_{1}} 
     \widetilde{R}_{x_{0},x_{1}}(w_{1} \tensor 1)
   \cdots
     \widetilde{R}_{x_{m-1},x_{m}}(w_{m} \tensor 1)
\right) \\
&
\hspace*{20mm}
   \cdot
\left(
\sum_{c_{2}} 
     \widetilde{R}_{x_{m},x_{m+1}}(1 \tensor w_{1})
   \cdots
     \widetilde{R}_{x_{2m-1},x_{2m}}(1 \tensor w_{m})
\right)   \\
& =
\lim_{m \longrightarrow \infty}
\sum_{x \leq z \leq y}
\left(
\sum_{c_{1}} 
     \widetilde{R}_{x_{0},x_{1}}(w_{1})
   \cdots
     \widetilde{R}_{x_{m-1},x_{m}}(w_{m})
\right) \\
&
\hspace*{20mm}
   \tensor
\left(
\sum_{c_{2}} 
     \widetilde{R}_{x_{m},x_{m+1}}(w_{1})
   \cdots
     \widetilde{R}_{x_{2m-1},x_{2m}}(w_{m})
\right)   \\
& =
     \sum_{x \leq z \leq y}
            F^{R}([x,z])) \tensor F^{R}([z,y]))  ,
\end{align*}
where in the first sum the chain $c$ is
$\{x = x_{0} \leq x_{1} \leq \cdots \leq x_{m} \leq \cdots \leq x_{2m} = y\}$,
$z$ is the element $x_{m}$ in the chain $c$ and
the chains $c_{1}$ and $c_{2}$ are the two chains
$\{x = x_{0} \leq x_{1} \leq \cdots \leq x_{m} = z\}$,
respectively
$\{z = x_{m} \leq \cdots \leq x_{2m} = y\}$.
A symmetric argument gives the second identity.
\end{proof}

Let $\mathcal{H}$ be the linear span of 
bounded labeled acyclic digraphs.
The space $\mathcal{H}$ is a Hopf algebra
with the product given by the Cartesian product and the coproduct
given by
$$   \Delta(G)
   = 
     \sum_{\hz \leq z \leq \ho} [\hz,z] \tensor [z,\ho] . $$
We have the following corollary.
\begin{corollary}
The two quasisymmetric functions $F^{R}$ and $F^{F}$ are
Hopf algebra homomorphisms from~$\mathcal{H}$
to the quasisymmetric functions $\QSym$.
\end{corollary}
\begin{proof}
Follows directly from 
Proposition~\ref{proposition_coalgebra_map}.
\end{proof}

Generalizing~\cite[Lemma~5.1]{Ehrenborg_Readdy_Sheffer},
we have the following lemma.
\begin{lemma}
For a labeled acyclic graph $G$,
$$     \sum_{x \leq z \leq y}
            \widetilde{R}_{x,z}(q)
          \cdot
            \widetilde{F}_{z,y}(-q)
     =
       \delta_{x,y}   .  $$
\label{lemma_inverse}
\end{lemma}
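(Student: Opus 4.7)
The plan is to substitute the definitions from equation~\eqref{equation_R_tilde_F_tilde} into the left-hand side (using the convention that the empty path of length zero contributes $\widetilde{R}_{x,x}(q) = \widetilde{F}_{x,x}(q) = 1$) and then to regroup the resulting double sum according to the concatenated path. A triple consisting of a vertex $z \in [x,y]$, a rising path $p_1$ from $x$ to $z$, and a falling path $p_2$ from $z$ to $y$ is exactly the data of a single path $p = (e_1, \ldots, e_k)$ from $x$ to $y$ of length $k = \ell(p)$, together with a distinguished split index $i = \ell(p_1) \in \{0, 1, \ldots, k\}$. With this reorganization, the left-hand side becomes
$$
\sum_{p \colon x \to y} \: \sum_{i \in V(p)} (-1)^{k - i} \, q^{k},
$$
where $V(p)$ denotes the set of indices $i$ such that the prefix $(e_1, \ldots, e_i)$ is rising and the suffix $(e_{i+1}, \ldots, e_k)$ is falling.

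The central step is to describe $V(p)$ for a path $p$ of length $k \geq 1$. Recording the comparisons at the $k-1$ interior positions of $p$ yields a word in the alphabet $\{\sim, \not\sim\}$. A split at index $i$ is valid precisely when the comparisons at positions $1, \ldots, i-1$ are all $\sim$ and the comparisons at positions $i+1, \ldots, k-1$ are all $\not\sim$; the comparison at position $i$ itself straddles the cut and imposes no constraint. Consequently $V(p)$ is nonempty if and only if the comparison word has the form $\sim^{r}\!\not\sim^{k-1-r}$ for some $0 \leq r \leq k-1$, in which case $V(p) = \{r, r+1\}$ is a two-element interval. The two contributions then evaluate to $(-1)^{k-r} q^{k}$ and $(-1)^{k-r-1} q^{k}$, which cancel exactly.

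Therefore every path of length $k \geq 1$ contributes zero to the sum. The only surviving contribution is the empty path of length zero, which exists if and only if $x = y$; it contributes $q^{0} \cdot (-q)^{0} = 1$. This gives the right-hand side $\delta_{x,y}$ and completes the argument. The only delicate point is the boundary bookkeeping at $i = 0$ and $i = k$, where one of $p_1, p_2$ degenerates to the empty path, together with the case $k = 1$, where the comparison word is empty and $V(p) = \{0,1\}$ automatically. Once the empty path is admitted as both trivially rising and trivially falling, the two-element structure of $V(p)$ and the resulting pairwise cancellation are immediate, and I do not anticipate any serious obstacle beyond this setup.
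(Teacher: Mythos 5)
Your argument is correct and is essentially the paper's second, bijective proof of this lemma: the paper's four-case sign-reversing involution $\sigma$ on pairs $(p_{1},p_{2})$ preserves the concatenated path and shifts the split point by one, which is exactly your pairing of the two elements of $V(p)=\{r,r+1\}$. Reorganizing by the underlying path and enumerating the valid splits makes the cancellation slightly more transparent, but the content is the same (the paper also gives a separate first proof via the antipode of $\QSym$, which your route does not touch).
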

\begin{proof}
Using the defining relation for the antipode $S$, we have that
\begin{align*}
\delta_{x,y}
& =
\sum_{x \leq z \leq y}
  F^{R}([x,z]) \cdot S(F^{R}([z,y])) \\
& =
\sum_{x \leq z \leq y}
    F^{R}([x,z])
  \cdot
    \left(
      \left.
        \omega(F^R([y,z]^{*}))
      \right|_{w_{1} = - w_{1}, w_{2} = - w_{2}, \ldots}
    \right) \\
& =
\sum_{x \leq z \leq y}
    F^{R}([x,z])
  \cdot
    \left(
      \left.
        F^F([y,z]^{*})
      \right|_{w_{1} = - w_{1}, w_{2} = - w_{2}, \ldots}
    \right) .
\end{align*}
Setting $w_{1} = q$ and $w_{2} = w_{3} = \cdots = 0$
the result follows by Proposition~\ref{proposition_m}.
\end{proof}

This lemma also has a direct bijective proof.

\begin{proof}[Second proof of Lemma~\ref{lemma_inverse}.]
Let ${\mathcal R}_{x,y}$
and ${\mathcal F}_{x,y}$
be the set of all rising, respectively falling,
paths from~$x$ to~$y$.
Consider the disjoint union
$$     {\mathcal U}_{x,y}
     =
       \bigcup_{x \leq z \leq y}
            {\mathcal R}_{x,z}
          \cdot
            {\mathcal F}_{z,y}    . $$
In other words, ${\mathcal U}_{x,y}$ is the set of all
pair of paths $(p_{1}, p_{2})$ such that 
$p_{1}$ is rising, $p_{2}$ is falling,
and $p_{1}$ ends where $p_{2}$ starts.
We would like to prove that
$$     \sum_{(p_{1}, p_{2}) \in {\mathcal U}_{x,y}}
          (-1)^{\ell(p_{2})} \cdot q^{\ell(p_{1}) + \ell(p_{2})}
    =
       \delta_{x,y}    .  $$
When $x = y$ the result is immediate.
We prove the case when $x < y$ by
a sign-reversing involution~$\sigma$.

Given a pair of paths $(p_{1}, p_{2})$ in
${\mathcal U}_{x,y}$
with
$p_{1} = (e_{1}, \ldots, e_{i})$,
$p_{2} = (f_{1}, \ldots, f_{j})$
and $i$ and $j$ not both equal to $0$,
define
another pair of paths $\sigma(p_{1},p_{2}) = (q_{1},q_{2})$
by the following four cases.
Case~(i):
if $i=0$, that is, $x=z$, let 
$q_{1} = (f_{1})$
and
$q_{2} = (f_{2}, \ldots, f_{j})$.
Case~(ii):
if $j=0$, that is, $z=y$, let 
$q_{1} = (e_{1}, \ldots, e_{i-1})$
and
$q_{2} = (e_{i})$.
Cases~(iii) and~(iv)
are both when $i$ and $j$ are greater than $0$.
Compare the two labels
$\lambda(e_{i})$ and $\lambda(f_{1})$
with the relation on $\Lambda$.
Case~(iii):
if $\lambda(e_{i}) \sim \lambda(f_{1})$
let the pair of paths be
$q_{1} = (e_{1}, \ldots, e_{i},f_{1})$
and
$q_{2} = (f_{2}, \ldots, f_{j})$.
Case~(iv):
otherwise, that is,
$\lambda(e_{i}) \not\sim \lambda(f_{1})$
let the pair of paths be
$q_{1} = (e_{1}, \ldots, e_{i-1})$
and
$q_{2} = (e_{i},f_{1}, \ldots, f_{j})$.

It is direct to verify that $\sigma$ is
an involution. 
Furthermore,
one has that
$\ell(p_{1}) + \ell(p_{2}) = \ell(q_{1}) + \ell(q_{2})$
and that
the lengths
of $p_{2}$ and $q_{2}$ have different parity.
Hence $\sigma$ is a sign-reversing involution,
proving the lemma.
\end{proof}

As corollary to Lemma~\ref{lemma_inverse}
we have the following result.
Compare with Exercise~5.11
in~\cite{Bjorner_Brenti}.
\begin{corollary}
For a balanced labeled acyclic graph $G$,
$$     \sum_{x \leq z \leq y}
            \widetilde{R}_{x,z}(q)
          \cdot
            \widetilde{R}_{z,y}(-q)
     =
       \delta_{x,y}   .  $$
For a bipartite balanced labeled acyclic graph $G$,
$$     \sum_{x \leq z \leq y}
            (-1)^{\ell(z,y)}
          \cdot
            \widetilde{R}_{x,z}(q)
          \cdot
            \widetilde{R}_{z,y}(-q)
     =
       \delta_{x,y}   .  $$
\end{corollary}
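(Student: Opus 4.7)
The plan is to derive both identities from Lemma~\ref{lemma_inverse} by invoking the additional hypotheses in turn. Recall that Lemma~\ref{lemma_inverse} asserts
$$\sum_{x \leq z \leq y} \widetilde{R}_{x,z}(q) \cdot \widetilde{F}_{z,y}(-q) = \delta_{x,y},$$
and that we also have the relation $\widetilde{R}_{z,y}(q) = q \cdot \widetilde{r}_{z,y}(q)$, $\widetilde{F}_{z,y}(q) = q \cdot \widetilde{f}_{z,y}(q)$ for $z < y$ (with $\widetilde{R}_{z,z}(q) = \widetilde{F}_{z,z}(q) = 1$ accounting for the empty path).

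For the first identity I would invoke the balanced condition in its raw form. By definition of balancedness, $\widetilde{r}_{z,y}(q) = \widetilde{f}_{z,y}(q)$ for every pair $z < y$; together with the trivial equality at $z=y$ and the relations just recalled, this upgrades to the equality $\widetilde{R}_{z,y}(q) = \widetilde{F}_{z,y}(q)$ of polynomials. Substituting $\widetilde{F}_{z,y}(-q) = \widetilde{R}_{z,y}(-q)$ into Lemma~\ref{lemma_inverse} then yields the first identity directly, with no further computation required.

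For the second identity I would additionally unpack the bipartite hypothesis into a parity statement about path lengths. If $G$ admits a proper $2$-coloring of its vertex set, then every directed path from $z$ to $y$ has length congruent to the color difference of $z$ and $y$ modulo~$2$; in particular $\ell(z,y) \bmod 2$ is well-defined, and we obtain the pointwise identity
$$\widetilde{R}_{z,y}(-q) \;=\; \sum_{p} (-q)^{\ell(p)} \;=\; (-1)^{\ell(z,y)} \widetilde{R}_{z,y}(q).$$
Inserting this relation into the first identity and regrouping the sign factors produces the claimed sum involving the $(-1)^{\ell(z,y)}$ prefactor.

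I do not anticipate any serious obstacle; the entire argument is essentially bookkeeping with the polynomials $\widetilde{R}$ and $\widetilde{F}$, once one translates ``bipartite'' into the parity statement for path lengths. The only small care point is to confirm that the boundary contributions $z=x$ and $z=y$ behave correctly, which they do because the empty path contributes $1$ on both sides of each equation.
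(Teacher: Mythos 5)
Your derivation of the first identity is exactly the intended one: the paper offers no written argument beyond presenting the statement as a corollary of Lemma~\ref{lemma_inverse}, and substituting $\widetilde{F}_{z,y}(-q)=\widetilde{R}_{z,y}(-q)$ (valid because balancedness gives $\widetilde{r}_{z,y}=\widetilde{f}_{z,y}$ for $z<y$, while both polynomials equal $1$ at $z=y$) is all that is required.

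For the second identity your translation of bipartiteness into the parity relation $\widetilde{R}_{z,y}(-q)=(-1)^{\ell(z,y)}\widetilde{R}_{z,y}(q)$ is correct, but the final ``regrouping'' step does not land on the displayed formula, and you should not wave it through. Substituting that relation into the first identity yields
$$\sum_{x\leq z\leq y}(-1)^{\ell(z,y)}\cdot\widetilde{R}_{x,z}(q)\cdot\widetilde{R}_{z,y}(q)=\delta_{x,y},$$
with the second factor evaluated at $+q$. The corollary as printed keeps the argument $-q$ \emph{and} the prefactor $(-1)^{\ell(z,y)}$; applying the same parity relation to that expression makes the two signs cancel and reduces it to $\sum_{z}\widetilde{R}_{x,z}(q)\cdot\widetilde{R}_{z,y}(q)=\delta_{x,y}$, which is false already for the Boolean algebra $B_{2}$ with its standard $R$-labeling: for $x=\hz$ and $y=\ho$ the left-hand side equals $q^{2}+q^{2}+q^{2}+q^{2}=4q^{2}$. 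So the statement as printed appears to contain a typo --- either the second factor should be $\widetilde{R}_{z,y}(q)$, or the sign should be $(-1)^{\ell(x,z)}$ (which, by the parity relation applied to the first factor instead, turns the sum into the first identity evaluated at $-q$). Your argument proves the corrected statement and is the right argument for it, but it cannot prove the literal one; a careful write-up should carry out the substitution explicitly and flag the discrepancy rather than assert that the bookkeeping ``produces the claimed sum.''
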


The quasi-symmetric functions
$F^{R}$ encode the same
information of the labeled digraph $G$
as the $\ab$-index $\Psi(G)$.
To make this more explicit, define the
linear map $\gamma : \zab \longrightarrow \QSym$
by
$$ \gamma\left(
     (\av-\bv)^{\alpha_{1}-1}
          \cdot
     \bv
          \cdot
     (\av-\bv)^{\alpha_{2}-1}
          \cdot
     \bv
          \cdots
     \bv
          \cdot
     (\av-\bv)^{\alpha_{k}-1}
         \right)
   =
     M_{\alpha}; $$
see~\cite[Section~3]{Ehrenborg_Readdy_Tchebyshev}.
The map $\gamma$ is a vector space isomorphism
between $\zab$ and the quasisymmetric function without
a constant term. Now we have
for a digraph $G$ the identity
$\gamma(\Psi(G)) = F^{R}(G)$.

Stembridge~\cite{Stembridge}
introduced a sub-Hopf algebra of
the quasisymmetric functions $\QSym$
known as the peak algebra $\Pi$.
It plays the same role as the subalgebra $\zcd$ of $\zab$.
Concretely, the peak algebra is the span of
the constant quasisymmetric function $1$
with
the image of $\zcd$ under the map $\gamma$.
Hence 
Theorem~\ref{theorem_cd} can be reformulated as
follows.
\begin{theorem}
For a labeled acyclic digraph $G$, the following
are equivalent:
\begin{itemize}
\item[(i)]
For every interval $[x,y]$ in the digraph $G$
and for every non-negative integer $k$,
the number of rising paths from $x$ to $y$ of length $k$
is equal to
the number of falling paths from $x$ to $y$ of length~$k$.

\item[(ii)]
For every interval $[x,y]$ in the digraph $G$
and for every {\em even} positive integer $k$,
the number of rising paths from $x$ to $y$ of length $k$
is equal to
the number of falling paths from $x$ to $y$ of length~$k$.

\item[(iii)]
The $F^{R}$ quasisymmetric function
of every interval $[x,y]$ in the digraph $G$
belongs to the peak algebra $\Pi$.
\end{itemize}
\end{theorem}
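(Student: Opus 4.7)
The plan is to transport the equivalence of Theorem~\ref{theorem_cd} through the linear map $\gamma : \zab \longrightarrow \QSym$ introduced just above the statement. Conditions $(i)$ and $(ii)$ here are literally the first two conditions of Theorem~\ref{theorem_cd}, so their equivalence requires no new argument. The entire task is to show that $(iii)$ is equivalent to condition $(iii)$ of Theorem~\ref{theorem_cd}, namely that $\Psi([x,y]) \in \zcd$ for every interval $[x,y]$ with $x < y$.

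For this, I would rely on three facts recorded in the paragraph preceding the theorem: $\gamma$ is a vector space isomorphism between $\zab$ and the positive-degree part of $\QSym$; the identity $\gamma(\Psi(G)) = F^{R}(G)$ holds; and the peak algebra $\Pi$ is by definition the span of the constant function $1$ together with $\gamma(\zcd)$. If $\Psi([x,y]) \in \zcd$, applying $\gamma$ immediately gives $F^{R}([x,y]) = \gamma(\Psi([x,y])) \in \gamma(\zcd) \subseteq \Pi$.

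For the converse, suppose $F^{R}([x,y]) \in \Pi$ and decompose it as $c \cdot 1 + \gamma(w)$ with $c \in \Zzz$ and $w \in \zcd$. Since $x < y$, every path $p$ from $x$ to $y$ has length at least $1$, so each fundamental quasisymmetric function $L_{\rho^{R}(\lambda(p))}$ appearing in $F^{R}([x,y])$ is indexed by a nonempty composition and therefore has no constant term. Hence $F^{R}([x,y])$ has no constant term, forcing $c = 0$. Injectivity of $\gamma$ then gives $\Psi([x,y]) = w \in \zcd$, and Theorem~\ref{theorem_cd} delivers condition $(i)$.

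The argument is really no more than bookkeeping: once the map $\gamma$ is in hand and the peak algebra has been identified with the image of $\zcd$ together with the constants, the new theorem reduces to Theorem~\ref{theorem_cd}. The only subtlety, and hence the only real obstacle, is verifying that the constant term of $F^{R}([x,y])$ vanishes for $x<y$ so that the decomposition in $\Pi$ lives entirely in $\gamma(\zcd)$; this is handled by the observation above about path lengths.
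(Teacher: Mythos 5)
Your argument is correct and is exactly the route the paper takes: the paper offers no separate proof, merely asserting that the statement is Theorem~\ref{theorem_cd} transported through $\gamma$, which is precisely what you carry out. Your explicit check that the constant term of $F^{R}([x,y])$ vanishes for $x<y$ (so the component of $1$ in the peak-algebra decomposition is zero) is a detail the paper leaves implicit, and it is handled correctly.
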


\section{Balanced linear edge labelings}
\label{section_linear} 

We call an edge labeling {\em linear} if the underlying relation
$(\Lambda, \sim)$ is that of a linear order.

\begin{theorem}
Let $u$ be a non-zero $\cd$-polynomial with non-negative
coefficients. Then there exists a bounded balanced
labeled acyclic digraph $G$
with linear edge labeling 
which satisfies $\Psi(G) = u$.
\label{theorem_linear}
\end{theorem}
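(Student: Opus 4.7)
Plan. I would prove the statement by induction on $\deg(u)$. Any $\cd$-polynomial $u$ with non-negative integer coefficients decomposes uniquely as $u = c \cdot 1 + \cv \cdot u_{1} + \dv \cdot u_{2}$ where $c \in \Zzz_{\geq 0}$ and $u_{1}, u_{2}$ are non-negative-coefficient $\cd$-polynomials of strictly smaller degree, so it suffices to exhibit three construction operations: a base case for the constant, and two ``prepend a letter'' operations, combined with a sum operation.

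The base case $u = c$ is realized by $c$ parallel edges from $\hz$ to $\ho$ with $c$ distinct, linearly ordered labels. The sum operation is: if bounded balanced linearly-labeled digraphs $G_{1}, G_{2}$ with disjoint label sets realize $u_{1}$ and $u_{2}$, then the graph $G_{1} \vee G_{2}$ obtained by identifying sources and identifying sinks, with the linear order any extension of the two given orders, realizes $u_{1} + u_{2}$. Acyclicity forces every path through a non-endpoint vertex of $G_{i}$ to stay inside $G_{i}$, so every proper sub-interval is a sub-interval of one of the $G_{i}$ (hence has $\cd$-polynomial $\ab$-index via $G_i$'s balance and Theorem~\ref{theorem_cd}), while $\Psi([\hz,\ho])$ is the sum of the two component $\ab$-indices; the implication $(iii) \Longrightarrow (i)$ of the theorem then yields balance.

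The key construction is prepending $\cv$. Given a balanced linearly-labeled $G$ realizing $w$ with label set $\Lambda_{G}$, build $C(G)$ as follows: introduce a new source $\hz$, two new vertices $L, R$, and two fresh labels $\ell_{-} < \ell_{+}$; delete the old source $\hz_{G}$; add edges $\hz \to L$ labeled $\ell_{-}$ and $\hz \to R$ labeled $\ell_{+}$; and for every original edge $\hz_{G} \to v$ of label $\mu$, add two ``combined'' edges $L \to v$ and $R \to v$, each labeled $\mu$. Extend the linear order by placing $\ell_{-}$ below all of $\Lambda_{G}$ and $\ell_{+}$ above. The analogous construction $D(G)$ realizing $\dv \cdot w$ uses the theta-block of Figure~\ref{figure_two}: add vertices $\hz, v_{1}, v_{2}$ and fresh labels $\alpha, \beta, \gamma$ with $\alpha < \beta$ below $\Lambda_{G}$ and $\gamma$ above, insert $\hz \to v_{1}$ labeled $\beta$, two parallel edges $v_{1} \to v_{2}$ labeled $\alpha$ and $\gamma$, and combined edges $v_{2} \to v$ labeled $\mu$ for every original edge $\hz_{G} \to v$ of label $\mu$.

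The main technical verification is that $C(G)$ realizes $\cv \cdot w$ and is balanced (analogously for $D(G)$). Every $\hz$-to-$\ho$ path in $C(G)$ factors uniquely as a choice $s \in \{L, R\}$ together with a $G$-path from $\hz_{G}$ to $\ho$; by the sandwich $\ell_{-} < \Lambda_{G} < \ell_{+}$, the first descent character is $\av$ when $s = L$ and $\bv$ when $s = R$, while later descent characters match those of the $G$-path, giving $\Psi(C(G)) = (\av + \bv) w = \cv \cdot w$. The same analysis yields $\Psi([\hz, v]_{C(G)}) = \cv \cdot \Psi([\hz_{G}, v]_{G})$, $\Psi([L, v]_{C(G)}) = \Psi([R, v]_{C(G)}) = \Psi([\hz_{G}, v]_{G})$, and all other sub-intervals coincide with sub-intervals of $G$ or are single edges. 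Each of these $\ab$-indices is a $\cd$-polynomial (using balance of $G$ and Theorem~\ref{theorem_cd}), so $(iii) \Longrightarrow (i)$ of the same theorem gives balance of $C(G)$. The computation for $D(G)$ is parallel, with the $\alpha$- and $\gamma$-edges producing seam descent words $\bv\av$ and $\av\bv$ at the first two positions so that they jointly contribute $\dv$. The principal obstacle throughout is this descent-and-label bookkeeping; it is tamed by placing each pair of fresh labels at the two extremes of the running linear order, which forces the seam-descent values automatically and keeps the linear order consistent across all levels of the induction.
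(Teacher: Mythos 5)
Your proof is correct, but it takes a genuinely different route from the paper's. The paper factors each $\cd$-monomial as $\cv^{i_{0}} \dv \cv^{i_{1}} \dv \cdots \dv \cv^{i_{p}}$, realizes each factor $\cv^{n}$ as the Bruhat order of a dihedral group (the butterfly poset), and splices the factors together with a general $\dv$-join (Lemma~\ref{lemma_u_d_v}: two parallel seam edges labeled by a new minimum and a new maximum give $\Psi(G_{1}) \cdot \dv \cdot \Psi(G_{2})$); sums are handled exactly as in your proposal (Lemma~\ref{lemma_u_plus_v}). You instead peel off the leading letter via the decomposition $u = c + \cv u_{1} + \dv u_{2}$ and induct on degree, using the prepend constructions $C(G)$ and $D(G)$. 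Your $D(G)$ is essentially the paper's $\dv$-join specialized to $G_{1}$ a single edge, while your $C(G)$ has no counterpart in the paper, since the paper never needs to prepend a single $\cv$ --- it imports $\cv^{n}$ wholesale from the butterfly poset. What your route buys is self-containedness (no appeal to Coxeter groups or to the butterfly poset being balanced with $\cd$-index $\cv^{n}$) at the cost of a slightly longer induction and more interval-by-interval bookkeeping; what the paper's route buys is brevity and the stronger multiplicative statement $\Psi(G_{1}) \cdot \dv \cdot \Psi(G_{2})$ for arbitrary balanced factors. Your verifications --- the seam descent words $\av$, $\bv$ for $C(G)$ and $\bv\av$, $\av\bv$ for $D(G)$ forced by placing fresh labels at the extremes of the linear order, the observation that every proper subinterval of the glued or prepended graph is either a single edge, a subinterval of $G$, or $\cv$ or $\dv$ times one, and the appeal to the equivalence $(iii) \Longleftrightarrow (i)$ of Theorem~\ref{theorem_cd} to conclude balance --- are all sound.
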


In order to prove this theorem, we first need the following
two lemmas.
\begin{lemma}
Let $G_{1}$ and $G_{2}$ be two bounded digraphs
with balanced linear edge labeling.
Let the underlying label sets be $\Lambda_{1}$, respectively $\Lambda_{2}$.
Define a new bounded labeled digraph $H$ by
\begin{align*}
V(H) & = V(G_{1}) \cup V(G_{2}) , \\
E(H) & = E(G_{1}) \cup E(G_{2}) \cup \{h_{1},h_{2}\} 
\end{align*}
where the new edges are
$\tail(h_{i}) = \ho_{1}$
and
$\head(h_{i}) = \hz_{2}$.
Let the new label set be
$\Lambda  =  \Lambda_{1} \cup \Lambda_{2} \cup \{\mu_{1},\mu_{2}\}$
and the linear order be any shuffling of
$\Lambda_{1}$ and $\Lambda_{2}$ with the condition that
the new labels $\mu_{1}$ and $\mu_{2}$
are the minimal, respectively the maximal, element of
the linear order $\Lambda$.
Finally, let the labels of the new edges be
$\lambda(h_{i}) = \mu_{i}$.
Then the digraph $H$ has a balanced labeling which is linear,
and its $\cd$-index is given by
$$ \Psi(H) = \Psi(G_{1}) \cdot \dv \cdot \Psi(G_{2}) .  $$
\label{lemma_u_d_v}
\end{lemma}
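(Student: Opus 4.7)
The plan is to first decompose directed paths of $H$ through the bridge $\{h_1, h_2\}$, then compute the descent contribution of the bridge, and finally invoke Theorem~\ref{theorem_cd} to obtain the balanced condition.

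First, I would observe that since the only edges of $H$ connecting $V(G_1)$ with $V(G_2)$ are $h_1$ and $h_2$, each directed path $p$ in $H$ from $\hz_1$ to $\ho_2$ factors uniquely as $p = (p_1, h_i, p_2)$, where $p_1$ is a path from $\hz_1$ to $\ho_1$ in $G_1$, $i \in \{1,2\}$, and $p_2$ is a path from $\hz_2$ to $\ho_2$ in $G_2$. Because $\mu_1$ is the minimum and $\mu_2$ is the maximum of the linear order on $\Lambda$, the comparison of $\mu_i$ with any other label is forced: traversing $h_1$ produces a descent just before the bridge and an ascent just after (local pattern $\bv\av$), while traversing $h_2$ produces $\av\bv$. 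The remaining letters of the descent word come entirely from $u(p_1)$ and $u(p_2)$, so summing over $i \in \{1,2\}$ yields $u(p_1) \cdot (\bv\av + \av\bv) \cdot u(p_2) = u(p_1) \cdot \dv \cdot u(p_2)$. Summing over all $p_1$ and $p_2$ then gives $\Psi(H) = \Psi(G_1) \cdot \dv \cdot \Psi(G_2)$.

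For the balanced condition I would apply the implication $(iii) \Longrightarrow (i)$ of Theorem~\ref{theorem_cd}, so that it suffices to show every interval $[x,y]$ in $H$ has a $\cd$-index. When both endpoints lie in $V(G_1)$ (or both in $V(G_2)$), acyclicity forces $[x,y]_H = [x,y]_{G_i}$, and since any shuffling of the linear orders preserves the original relation on each $\Lambda_i$, the $\cd$-index is inherited from $G_i$. When $x \in V(G_1)$ and $y \in V(G_2)$, the same factorization applies, with boundary bookkeeping: if exactly one of $p_1, p_2$ is empty (i.e.\ $x = \ho_1$ or $y = \hz_2$), the bridge contributes only one letter which sums over $i$ to $\av + \bv = \cv$, giving $\cv \cdot \Psi([\hz_2, y])$ or $\Psi([x,\ho_1]) \cdot \cv$; if both are empty, $\Psi([\ho_1, \hz_2]) = 2$. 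Each of these expressions lies in $\zcd$, and balancedness of $H$ follows.

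The main subtlety will be the boundary bookkeeping, together with verifying that no vertex of $V(G_2)$ can sit below $\ho_1$ in $H$ (and symmetrically), so that the interval analysis reduces cleanly to the two building blocks $G_1$ and $G_2$.
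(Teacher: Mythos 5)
Your proposal is correct and follows essentially the same route as the paper: factor each path through the bridge, observe that $h_1$ and $h_2$ contribute $\bv\av$ and $\av\bv$ respectively so that summing over the two bridge edges produces $\dv$, and then verify balancedness by checking that every interval has a $\cd$-index. The paper dispatches the interval analysis with the phrase ``a similar argument,'' so your explicit boundary bookkeeping (the $\cv$ factor when $x=\ho_1$ or $y=\hz_2$, and $\Psi([\ho_1,\hz_2])=2$) simply fills in details the authors omit.
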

\begin{proof}
Every path $p$
from $\hz_{G_{1}} = \hz_{H}$ 
to $\ho_{G_{2}} = \ho_{H}$
breaks into a path in $G_{1}$,
a path in $G_{2}$ and one of the new edges $h_{1}$ or $h_{2}$.
Observe that
$$   u(p) = u(p|_{G_{1}}) \cdot v \cdot u(p|_{G_{2}})  ,  $$
where $v = \bv\av$ if the new edge is $h_{1}$
and $v = \av\bv$ if the new edge is $h_{2}$.
Hence summing over all paths we have
$$ \Psi(H) = \Psi(G_{1}) \cdot (\bv\av + \av\bv) \cdot \Psi(G_{2}) .  $$
A similar argument shows that every interval of $H$ has a $\cd$-index
and hence the labeling is balanced.
\end{proof}

\begin{lemma}
Let $G_{1}$ and $G_{2}$ be two bounded digraphs
with balanced linear edge labelings.
Let $H$ be the bounded digraph obtained by the disjoint
union of $G_{1}$ and $G_{2}$
and identifying the minimal elements~$\hz_{G_{1}}$ and $\hz_{G_{2}}$,
and the maximal elements $\ho_{G_{1}}$ and $\ho_{G_{2}}$.
Then $H$ has a balanced linear edge labeling and
its $\cd$-index is the sum
$$   \Psi(H) = \Psi(G_{1}) + \Psi(G_{2})    .   $$
\label{lemma_u_plus_v}
\end{lemma}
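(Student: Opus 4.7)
The plan is to reduce both claims---the $\cd$-index identity and the balanced labeling---to a single structural observation about paths in $H$. First I would verify that every directed path from $\hz$ to $\ho$ in $H$ lies entirely in one of the two sub-digraphs $G_1$ or $G_2$. Indeed, $\hz$ and $\ho$ are the only vertices shared between $G_1$ and $G_2$, and since $H$ is acyclic with $\hz$ a source and $\ho$ a sink, once a path leaves $\hz$ via an edge of $G_i$ it cannot re-enter $G_{3-i}$ without passing through $\hz$ again (forbidden by acyclicity) or through $\ho$ (where it must terminate). The same argument shows that for any $x \neq \hz$ and $y \neq \ho$ lying in $G_i$, the interval $[x,y]_H$ coincides, as a labeled sub-digraph, with $[x,y]_{G_i}$; the only interval requiring special attention is the top interval $[\hz,\ho]$.

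Next I would establish the $\cd$-index identity. The descent word $u(p)$ is determined by comparisons of consecutive edge labels under the relation on $\Lambda = \Lambda_1 \cup \Lambda_2$. Since consecutive edges of any path from $\hz$ to $\ho$ both belong to the same $\Lambda_i$, and the chosen linear order on $\Lambda$ restricts to the original linear order on each $\Lambda_i$, the descent word computed in $H$ agrees with the descent word computed in $G_i$. Summing~\eqref{equation_ab-index} over all paths and splitting according to which sub-digraph the path traverses yields
\[
     \Psi(H) \:\: = \:\: \sum_{p \in G_{1}} u(p) \: + \: \sum_{p \in G_{2}} u(p) \:\: = \:\: \Psi(G_{1}) + \Psi(G_{2}).
\]
This already shows that $\Psi(H)$ lies in $\zcd$, since each summand does by the balancedness of $G_i$ together with Theorem~\ref{theorem_cd}.

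For the balanced condition on $H$, proper intervals $[x,y] \neq [\hz,\ho]$ are identical to intervals of $G_1$ or $G_2$ and so inherit the balanced condition directly. For the top interval, the same path decomposition shows that the number of rising (respectively falling) paths of length $k$ in $H$ equals the sum of the corresponding counts in $G_1$ and $G_2$, which match termwise because each $G_i$ is balanced. Equivalently, since $\Psi([x,y])$ lies in $\zcd$ for every interval of $H$, the implication $(iii) \Longrightarrow (i)$ of Theorem~\ref{theorem_cd} delivers the balanced condition at one stroke. There is no real obstacle here; the only point requiring care is to fix the linear order on $\Lambda_1 \cup \Lambda_2$ so that it restricts to the given orders on the two label sets, so no spurious cross-comparison between $\Lambda_1$ and $\Lambda_2$ could affect a descent word---a condition that is automatically satisfied since no path has two consecutive edges from different sub-digraphs.
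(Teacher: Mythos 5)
Your proof is correct. The paper in fact states this lemma without any proof, and your argument --- observing that every directed path in $H$ (and more generally every interval of $H$) lies entirely inside one of $G_{1}$ or $G_{2}$, that the descent words are unaffected provided the linear order on $\Lambda_{1} \cup \Lambda_{2}$ restricts to the given orders, and then splitting the sum defining $\Psi$ accordingly --- is precisely the intended one, mirroring the path-decomposition used in the paper's proof of Lemma~\ref{lemma_u_d_v}.
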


\begin{proof}[Proof of Theorem~\ref{theorem_linear}.]
The strong Bruhat order on the dihedral group
is the butterfly poset and hence its
$\cd$-index is $\cv^{n}$.
Hence by Lemma~\ref{lemma_u_d_v}
for any $\cd$-monomial $v$
we can construct
a bounded labeled acyclic digraph $G$ with a balanced linear order
such that $\Psi(G) = v$.
By Lemma~\ref{lemma_u_plus_v}
this can be extended
to any non-negative $\cd$-polynomial.
\end{proof}

As a remark, 
Bayer and Hetyei's work on
the cone spanned
by Eulerian posets
shows one can obtain a limiting poset which
has $\cd$-index $2^k \cdot w$,
where $w$ is a $\cd$ monomial with exactly $k$ $\dv$'s;
see~\cite[Proposition 2.9]{Bayer_Hetyei}.

Theorem~\ref{theorem_linear} motivates us to 
make the following conjecture.
\begin{conjecture}
The $\cd$-index of a bounded labeled acyclic digraph $G$
with a balanced linear edge labeling is non-negative.
\label{conjecture_linear}
\end{conjecture}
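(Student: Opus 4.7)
The plan is to seek a manifestly non-negative combinatorial interpretation of each coefficient of $\Psi(G)$ as a $\cd$-polynomial. One concrete line of attack is to adapt Stanley's shelling argument for spherically shellable Eulerian posets: use the linear order on $\Lambda$ to define a lexicographic ordering on the directed paths from $\hz$ to $\ho$, and attach to each path a canonical ``restriction'' $\cd$-monomial, in analogy with the way a facet in a shellable complex contributes a single $\cd$-monomial coming from its restriction face. If such a path shelling can be constructed, then the coefficient of any $\cd$-monomial $w$ would simply count the paths whose restriction equals~$w$, which is automatically non-negative.

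Before pursuing this in full generality, the conjecture should be verified on structural building blocks. By Lemmas~\ref{lemma_u_d_v} and~\ref{lemma_u_plus_v}, non-negativity is preserved under the product-by-$\dv$ and disjoint-union operations, so the class of digraphs with non-negative $\cd$-index is closed under these operations. Checking the conjecture on Bruhat intervals in low rank, using Theorem~\ref{theorem_Billera_Brenti}, would be a natural next test; even a positive answer restricted to Bruhat graphs, which is itself a long-standing open problem of Billera and Brenti, would be a significant milestone.

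A complementary, more algebraic approach exploits the peak-algebra reformulation of Theorem~\ref{theorem_cd}: since $F^{R}(G)$ lies in Stembridge's peak algebra~$\Pi$, non-negativity of the $\cd$-index is equivalent to non-negativity of the expansion of $F^{R}(G)$ in Stembridge's peak basis. This translates the problem into a statement about quasisymmetric functions, where the path-enumeration identities from Propositions~\ref{proposition_m} and~\ref{proposition_coalgebra_map} and the Hopf-algebra structure may provide new leverage.

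The main obstacle is that the balanced condition is purely enumerative rather than structural. In the classical Eulerian setting, non-negativity of the $\cd$-index ultimately rests on geometric input---shellability in Stanley's proof, the hard Lefschetz theorem in Karu's proof of the Gorenstein* case. A balanced digraph with linear labeling need not correspond to any poset, let alone a polytope or Gorenstein* complex, so one would likely need a new combinatorial mechanism---perhaps a sign-reversing involution on pairs consisting of a path together with a subset of its internal vertices, or a shelling-like pairing intrinsic to the linear labeling---to extract non-negativity directly from the balance condition. The fact that Theorem~\ref{theorem_linear} already exhibits \emph{every} non-negative $\cd$-polynomial as some such $\Psi(G)$ suggests that the class is genuinely flexible, and that any general proof must engage with this flexibility at a fundamental level.
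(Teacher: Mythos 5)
You should be aware that the statement you were asked to prove is stated in the paper as Conjecture~\ref{conjecture_linear}; the paper offers no proof of it, and explicitly lists it as an open problem in the concluding remarks, suggesting exactly the two directions you mention (a shelling-type combinatorial argument or a more algebraic one). So there is no ``paper's proof'' to compare against, and your submission, quite appropriately, is a research plan rather than a proof. The genuine gap is simply that none of your proposed strategies is carried out: the ``path shelling'' is not constructed, and it is not clear that a lexicographic order on the directed paths from $\hz$ to $\ho$ partitions the set of descent words into blocks each of which sums to a single $\cd$-monomial --- in the poset setting this partition is forced by the interval structure of a shelling order on facets, and a balanced digraph offers no analogous structure a priori. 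Likewise, the closure properties from Lemmas~\ref{lemma_u_d_v} and~\ref{lemma_u_plus_v} only show that the class of digraphs realizing non-negative $\cd$-polynomials is large (that is the content of Theorem~\ref{theorem_linear}); they say nothing about an arbitrary balanced linearly labeled digraph, which need not decompose under these operations. The peak-algebra reformulation is an exact translation of the problem, not a reduction of it.

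Two of your observations are worth keeping. First, you correctly identify why the problem is hard: the balanced condition is enumerative, whereas every known non-negativity proof for the $\cd$-index (Stanley's $S$-shellability argument, Karu's Gorenstein$^{*}$ theorem) rests on structural or geometric input, and the Bruhat-graph special case is itself open. Second, the linearity hypothesis is genuinely needed --- the second relation in Figure~\ref{figure_two}, which is balanced but not a linear order, already produces the $\cd$-index $\cv^{2} - \dv$ with a negative coefficient --- so any argument must use the linear order in an essential way, which your proposed involution or pairing would have to make explicit.
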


This conjecture implies the non-negativity of the $\cd$-index
of Bruhat graphs;
see~\cite[Conjecture~6.1]{Billera_Brenti}.

\begin{conjecture}
[Billera--Brenti]
Let $(W,S)$ be a Coxeter system with $u,v \in W$ and
$u < v$.
Then the $\cd$-index of the interval
$[u,v]$ is non-negative.
\end{conjecture}

\section{Concluding remarks}

As was mentioned in the previous\
section,
verifying Conjecture~\ref{conjecture_linear}
would imply the non-negativity
of the $\cd$-index of Bruhat graphs~\cite{Billera_Brenti}.
In the case of Bruhat graphs of infinite Coxeter groups,
recent work of the 
Ehrenborg, Hetyei and Readdy~\cite{Ehrenborg_Hetyei_Readdy}
on level Eulerian posets, that is,
infinite Eulerian
posets with a local regularity condition,
may provide some insight.
Blanco~\cite{Blanco_shortest} has studied instances 
of non-negativity for the poset of shortest paths in the Bruhat
order.  
Shellability arguments have been used by Stanley
to prove 
the non-negativity of
the $\cd$-index of 
$S$-shellable spheres~\cite{Stanley_flag_vectors},
in Karu's argument for the non-negativity of the toric $g$-vector
of non-rational polytopes~\cite{Karu},
as well as Karu's  argument for the non-negativity of $\cd$-index of
Gorenstein$^{*}$ posets~\cite{Karu_cd}.

Recent work of Fan and He~\cite{Fan_He_2} have 
applied Karu's flip condition~\cite{Karu_complete}
to show the coefficient of $\dv c^i \dv \cv^j$ is non-negative
in the $\cd$-index of any Bruhat graph.
See also~\cite{Fan_He_1}.
Perhaps an analogous result
can be established
for digraphs having a balanced
linear order.

Another research
direction
is to develop Kazhdan--Lusztig polynomials for directed graphs.
In order to do this,
one must require the graph to be bipartite, just
as the bipartite condition holds for Bruhat graphs.
Brenti, Caselli and Marietti's
theory of special matchings of a Hasse diagram of a poset
parallels the notion of perfect matchings in the 
Bruhat graph~\cite{Brenti_Caselli_Marietti}.
Can this be extended to
balanced graphs?
Morel~\cite{Morel} has given a geometric
interpretation of
Brenti's~\cite{Brenti} non-recursive lattice path 
formulation of 
the Kazhdan--Lusztig polynomials
in the case of Weyl groups.
This may give some insight into Kazhdan--Lusztig
polynomials of directed graphs.

In Billera and Brenti's original paper, 
restricting the
$\cd$-index of Bruhat graphs to the highest degree terms
yields the $\cd$-index
of the Eulerian poset $[u,v]$.
Understanding the degree restricted $\cd$-index
may  suggest a natural decomposition of the
paths in the Bruhat graph.
For dihedral Coxeter systems Blanco
showed the $\cd$-index
is given by the Fibonacci polynomials~\cite{Blanco_dihedral}.
This gives evidence
for non-negativity of the $\cd$-index for Bruhat graphs.

Billera and Brenti's~\cite{Billera_Brenti} expression for 
the Kazhdan--Lusztig polynomial $P_{u,v}(q)$
in terms of the $\cd$-index $\Psi([u,v])$
was based upon 
showing the generalized Dehn--Sommerville relations
hold for
coefficients of polynomials 
arising in Kazhdan--Lusztig polynomials~\cite[Theorem 8.4]{Brenti},
quasisymmetric functions,
and the peak algebra.
Their expression is
\begin{equation}
  P_{u,v}(q) = \sum_{i=0}^{\lfloor n/2 \rfloor} a_i 
   \cdot q^i \cdot \mathcal{B}_{n-2i}(-q),
\label{equation_BB_identity}
\end{equation}
where
$\mathcal{B}_k(q)$ is the $k$th ballot polynomial
$
     \mathcal{B}_k(q) = 1/(k+1) \cdot 
             \sum_{i=0}^{\lfloor k/2 \rfloor} 
                        \binom{k+1}{i}  \cdot
                        (k+1-2i) \cdot q^i
$
and
the coefficients 
$a_i$ arise out of a nontrivial computation from the
coefficients of 
$\Psi([u,v])$.
Although a remarkable identity, it does not reveal why
$P_{u,v}(q)$ is non-negative for
Coxeter groups.

\noindent
As was noted in~\cite{Billera_Brenti}, restricting the
identity~(\ref{equation_BB_identity})
for the coefficients $a_i$
to the highest degree $\cd$-monomials
yields the
Bayer--Ehrenborg~\cite{Bayer_Ehrenborg} expression for the $g$-polynomial
of
$\Psi([u,v])^*$, implying the difference
$P_{u,v}(q) - g([u,v]^{*},q)$ is a function of lower
degree
$\cd$-coefficients; see~\cite[Remark~4.2]{Billera_Brenti}.
Brenti and Caselli~\cite{Brenti_Caselli}
 have a new identity for the Kazhdan--Lusztig 
polynomials in terms of signed polynomials 
arising from an index set of 
what they call ``slalom paths''
associated to a sparse subset $T \subseteq \{1, \ldots, n-1\}$.
This may be related to the Bayer--Ehrenborg $S$-diagram approach;
see~\cite[Section~7]{Bayer_Ehrenborg}.
Other  recent
papers describing the $\cd$-index as weighted
sum of lattice paths
include that of 
Slone~\cite{Slone}
for the $\cd$-index of the mixing operator,
and B.\ Fox~\cite{Fox_a_lattice_path} for the 
the $\cd$-index of the diamond product of two Eulerian posets.
On the polytope level, these 
poset operations correspond to taking 
the join of polytopes 
and the Cartesian product of polytopes,
respectively.
We expect these ideas to be fruitful
for developing 
the Kazhdan--Lusztig polynomial of certain balanced graphs.

Reading~\cite{Reading} provided a recursive method to compute
$\cd$-index of any interval in the Bruhat order.
Can his methods be extended to any interval in the Bruhat graph?
Do they generalize to balanced graphs?

Returning our attention to graded posets and especially Eulerian posets,
when do these posets possess a labeling?
There are Eulerian posets which do not have
an $R$-labeling; see~\cite{Ehrenborg_Readdy_non-R}.
What more can be said about Eulerian posets that have
an $R$-labeling? 
Conjecture~\ref{conjecture_linear} implies 
that Eulerian posets with an $R$-labeling have a 
non-negative $\cd$-index.

\section*{Acknowledgements}

The first author was partially supported by
National Science Foundation grant 0902063.
This work was partially supported by a grant from the 
Simons Foundation (\#429370 to Richard Ehrenborg;
\#206001 and \#422467 to Margaret Readdy).
Both authors would like to thank the Princeton University Mathematics
Department for its hospitality and support during
the academic year 2014--2015,
and the Institute for Advanced Study for hosting a 
research visit in Summer 2019.

\newcommand{\journal}[6]{{\sc #1,} #2, {\it #3} {\bf #4} (#5), #6.}
\newcommand{\book}[4]{{\sc #1,} ``#2,'' #3, #4.}
\newcommand{\bookf}[5]{{\sc #1,} ``#2,'' #3, #4, #5.}
\newcommand{\arxiv}[3]{{\sc #1,} #2, {\tt #3}.}
\newcommand{\preprint}[3]{{\sc #1,} #2, {(#3)}.}
\newcommand{\thesis}[4]{{\sc #1,} ``#2,'' Doctoral dissertation, #3, #4.}


\begin{thebibliography}{99}

\bibitem{Bayer_Billera}
\journal{M.\ Bayer and L.\ Billera}
        {Generalized Dehn--Sommerville relations for polytopes,
         spheres and Eulerian partially ordered sets}
        {Invent.\ Math.}
        {79}{1985}{143--157}


\bibitem{Bayer_Ehrenborg}
\journal{M.\ Bayer and R.\ Ehrenborg}
        {The toric $h$-vectors of partially ordered sets}
        {Trans.\ Amer.\ Math.\ Soc.}
        {352}{2000}{4515--4531}



\bibitem{Bayer_Hetyei}
\journal{M.\ M.\ Bayer and G.\ Hetyei}
            {Flag vectors of Eulerian partially ordered sets}
            {European J.\ Combinatorics}
            {22}{2001}{5--26}



\bibitem{Bayer_Klapper}
\journal{M.\ Bayer and A.\ Klapper}
        {A new index for polytopes}
        {Discrete Comput.\ Geom.}
        {6}{1991}{33--47}


\bibitem{Bergeron_Mykytiuk_Sottile_van_Willigenburg}
\journal{N.\ Bergeron, S.\ Mykytiuk, F.\ Sottile and S.\ van Willigenburg}
        {Noncommutative Pieri operators on posets}
        {J.\ Combin.\ Theory Ser.\ A}
        {91}{2000}{84--110}


\bibitem{Bergeron_Sottile}
\journal{N.\ Bergeron and F.\ Sottile}
        {Hopf algebra and edge-labeled posets}
        {J.\ of Alg.}
        {216}{1999}{641--651}



\bibitem{Billera_Brenti}
\journal{L.\ J.\ Billera and F.\ Brenti}
        {Quasisymmetric functions and Kazhdan--Lusztig polynomials}
        {Israel Jour.\ Math.}
        {184}{2011}{317--348}

\bibitem{Billera_Ehrenborg}
\journal{L.\ J.\ Billera and R.\ Ehrenborg}
        {Monotonicity of the cd-index for polytopes}
        {Math.\ Z.}
        {233}{2000}{421--441}

\bibitem{Billera_Ehrenborg_Readdy_om}
\journal{L.\ J.\ Billera, R.\ Ehrenborg, and M.\ Readdy}
        {The {\ctd}-index of oriented matroids}
        {J.\ Combin.\ Theory Ser.\ A}
        {80}{1997}{79--105}

\bibitem{Billera_Ehrenborg_Readdy_z}
        {\sc L.\ J.\ Billera, R.\ Ehrenborg, and M.\ Readdy},
        {The $\cd$-index of zonotopes and arrangements},
        {\em Mathematical essays in honor of Gian-Carlo Rota}
        (B.\ Sagan and R.\ P.\ Stanley, eds.),
         Birkh\"auser, Boston, 1998, 23--40.


\bibitem{Billera_Liu}
\journal{L.\ J.\ Billera and N.\ Liu}
        {Noncommutative enumeration in graded posets}
        {J.\ Algebraic Combin.}
        {12}{2000}{7--24}


\bibitem{Bjorner}
\journal{A.\ Bj\"orner}
        {Shellable and Cohen--Macaulay partially ordered sets}
        {Trans.\ Amer.\ Math.\ Soc.}
        {260}{1980}{159--183}

\bibitem{Bjorner_Brenti}
\book{A.\ Bj\"orner and F.\ Brenti}
     {Combinatorics of Coxeter groups}
     {Springer}
     {2005}


\bibitem{Bjorner_Wachs}
\journal{A.\ Bj\"orner and M.\ Wachs}
        {Shellable nonpure complexes and posets. I.}
        {Trans.\ Amer.\ Math.\ Soc.}
        {348}{1996}{1299--1327}



\bibitem{Blanco_dihedral}
        {\sc S.\ A.\ Blanco},
        {The complete $\cd$-index of dihedral 
        and universal Coxeter groups},
        {\it Electron.\ J.\ Combin.}
        {\bf 18} (2011), no 1, Paper 174, 16pp.




\bibitem{Blanco_shortest}
\journal{S.\ A.\ Blanco}
        {Shortest path poset of Bruhat intervals}
        {J.\ Algebraic Combin.}
        {38}{2013}{585--596}



\bibitem{Brenti}
\journal{F.\ Brenti}
        {Lattice paths and Kazhdan--Lusztig polynomials}
        {Jour.\ Amer.\ Math.\ Soc.}
        {11}{1998}{229--259}


\bibitem{Brenti_Caselli}
\journal{F.\ Brenti and F.\ Caselli}
        {Peak algebras, paths in the Bruhat graph
         and Kazhdan--Lusztig polynmials}
        {Adv.\ in Math }
        {304}{2017}{539--582}


\bibitem{Brenti_Caselli_Marietti}
\journal{F.\ Brenti, F.\ Caselli and M.\ Marietti}
        {Special Matchings and Coxeter groups}
        {Adv.\ Applied Math}
        {38}{2007}{210--226}



\bibitem{Dyer}
\thesis{M.\ J.\ Dyer}
       {Hecke algebras and reflections in Coxeter groups}
       {University of Sydney}
       {1987}

\bibitem{Ehrenborg_Hopf}
\journal{R.\ Ehrenborg}
        {On posets and Hopf algebras}
        {Adv.\ Math.}
        {119}{1996}{1--25}


\bibitem{Ehrenborg_lifting}
\journal{R.\ Ehrenborg}
        {Lifting inequalities for polytopes}
        {Adv.\ Math.}
        {193}{2005}{205--222}


\bibitem{Ehrenborg_zonotopes}
        {\sc R.\ Ehrenborg},
        {Inequalities for zonotopes}, in
        {MSRI Publication on Combinatorial and Computational Geometry}
        (J.E.\ Goodman, J.\ Pach and E.\ Eelzl, eds.),
        Cambridge University Press, Cambridge, England, 2005,
        pp.\ 277--286.


\bibitem{Ehrenborg_Goresky_Readdy}
\journal{R.\ Ehrenborg, M.\ Goresky and M.\ Readdy}
        {Euler flag enumeration of Whitney stratified spaces}
        {Adv.\ Math.}
        {268}{2015}
        {85--128}


\bibitem{Ehrenborg_Hetyei_Readdy}
\journal{R.\ Ehrenborg, G.\ Hetyei and M.\ Readdy}
        {Level Eulerian posets}
        {Graphs and Combinatorics}
        {29}{2013}{857--882}


\bibitem{Ehrenborg_Karu}
\journal{R.\ Ehrenborg and K.\ Karu}
        {Decomposition theorem for the $\cd$-index of Gorenstein* posets}
        {J.\ Algebraic Combin.}
        {26}{2007}{225--251}


\bibitem{Ehrenborg_Readdy_Sheffer}
\journal{R.\ Ehrenborg and M.\ Readdy}
        {Sheffer posets and $r$-signed permutations}
        {Ann.\ Sci.\ Math.\ Qu\'ebec}
        {19}{1995}{173--196}

\bibitem{Ehrenborg_Readdy_r}
\journal{R.\ Ehrenborg and M.\ Readdy}
        {The {\bf r}-cubical lattice and
         a generalization of the {\bf cd}-index}
        {European J.\ Combin.}
        {17}{1996}{709--725}


\bibitem{Ehrenborg_Readdy_c}
\journal{R.\ Ehrenborg and M.\ Readdy}
        {Coproducts and the $\cd$-index}
        {J.\ Algebraic Combin.}
        {8}{1998}{273--299}

\bibitem{Ehrenborg_Readdy_homology}
\journal{R.\ Ehrenborg and M.\ Readdy}
        {Homology of Newtonian coalgebras}
        {European J.\ Combin.}
        {23}{2002}{919--927}

\bibitem{Ehrenborg_Readdy_non-R}
\journal{R.\ Ehrenborg and M.\ Readdy}
        {On the non-existence of an $R$-labeling}
        {Order}
        {28}{2011}{437--442}

\bibitem{Ehrenborg_Readdy_manifolds}
\journal{R.\ Ehrenborg and M.\ Readdy}
        {Manifold arrangements}
        {J.\ Combin.\ Theory Ser.\ A}
        {125}{2014}{214--239}



\bibitem{Ehrenborg_Readdy_Tchebyshev}
\journal{R.\ Ehrenborg and M.\ Readdy}
        {The Tchebyshev transforms of the first and second kind}
        {Ann.\ Comb.}
        {14}{2010}{211--244}

\bibitem{Ehrenborg_Readdy_Slone}
\journal{R.\ Ehrenborg, M.\ Readdy and M.\ Slone}
        {Affine and toric hyperplane arrangements}
        {Discrete Comput.\ Geom.}
        {41}{2009}{481--512}





\bibitem{Fan_He_1}
        {\sc N.\ J.\ Y.\ Fan and L.\ He},
        {The complete $\cd$-index of Boolean lattices}
        {\it Electron.\ J.\ Combin.}
        {\bf 22} (2015), no.\ 2, Paper 2.45, 18 pp.



\bibitem{Fan_He_2}
\journal{N.\ J.\ Y.\ Fan and L.\ He}
        {On the non-negativity of the complete $\cd$-index}
        {Discrete Math.}
        {338}{2015}{2037--2041}


\bibitem{Fox_a_lattice_path}
\journal{N.\ B.\  Fox}
            {A lattice path interpretation of the diamond product}
            {Ann.\ Comb.}
            {20}{2016}{569--586}


\bibitem{Joni_Rota}
\journal{S.\ A.\ Joni and G.-C.\ Rota}
        {Coalgebras and bialgebras in combinatorics}
        {Stud.\ Appl.\ Math.}
        {61}{1979}{93--139}


\bibitem{Karu}
\journal{K.\ Karu}
        {Hard Lefschetz theorem for nonrational polytopes}
        {Invent.\ Math.}
        {157}{2004}{419--447}

\bibitem{Karu_cd}
\journal{K.\ Karu}
        {The $cd$-index of fans and posets}
        {Compos.\ Math.}
        {142}{2006}{701--718}

\bibitem{Karu_complete}
\journal{K.\ Karu}
        {On the complete ${\bf cd}$-index of a Bruhat interval}
        {J.\ Algebraic Combin.}
        {38}{2013}{27--541}



\bibitem{Kazhdan_Lusztig_Hecke}
\journal{D.\ Kazhdan and G.\ Lusztig}
        {Representations of Coxeter groups
         and Hecke algebras}
        {Invent.\ Math}
        {53}{1979}{165--184}


\bibitem{Kazhdan_Lusztig_Schubert}
\journal{D.\ Kazhdan and G.\ Lusztig}
        {Schubert varieties and Poincar\'e duality}
        {Proc.\ Sympos.\ Pure Math.}
        {34}{1980}{185--203}

\bibitem{Malvenuto_Reutenauer}
\journal{C.\ Malvenuto and C.\ Reutenauer}
        {Duality between quasi-symmetric functions
         and the Solomon descent algebra}
        {J.\ Algebra}
        {177}{1995}{967--982}


\bibitem{Morel}
\journal{S.\ Morel}
        {Note sur les polyn\^omes de Kazhdan--Lusztig}
        {Math.\ Z}
        {268}{2011}{593--600}


\bibitem{Reading}
        {\sc N.\ Reading},
        {The $cd$-index of Bruhat intervals},
        {\it Electron.\ J.\ Combin.}
        {\bf 11} (2004), no.\ 1, Research Paper 74, 25 pp.


\bibitem{Slone}
\thesis{M.\ Slone}
       {Homological combinatorics and extensions of the $\cd$-index}
       {University of Kentucky}
       {2008}


\bibitem{Stanley_flag_vectors}
\journal{R.\ P.\ Stanley}
        {Flag $f$-vectors and the $cd$-index}
        {Math.\ Z.}
        {216}{1994}{483--499}


\bibitem{Stanley_EC_I}
\book{R.\ P.\ Stanley}
     {Enumerative combinatorics. Volume 1. Second edition.}
     {Cambridge University Press, Cambridge}
     {2012}



\bibitem{Stanley_d}
\journal{R.\ P.\ Stanley}
        {Flag $f$-vectors and the $cd$-index}
        {Math.\ Z.}
        {216}{1994}{483--499}


\bibitem{Stembridge}
\journal{J.\ Stembridge}
        {Enriched $P$-partitions}
        {Trans.\ Amer.\ Math.\ Soc.}
        {349}{1997}{763--788}

\bibitem{Sweedler}
\book{M.\ Sweedler}
     {Hopf Algebras}
     {Benjamin, New York}
     {1969}

\bibitem{Verma}
\journal{D.-N.\ Verma}
        {M\"obius inversion for the Bruhat order on a Weyl group}
        {Ann.\ Sci.\ \'Ecole Norm.\ Sup.}
        {4}{1971}{393--398}

\end{thebibliography}
\end{document}